\setlist[enumerate]{label=(\roman*)}
\newtheorem{theorem}{Theorem}[section]
\newtheorem*{theorem*}{Theorem}
\newtheorem{proposition}[theorem]{Proposition}
\newtheorem{corollary}[theorem]{Corollary}
\newtheorem{lemma}[theorem]{Lemma}
\theoremstyle{definition}
\newtheorem{definition}[theorem]{Definition}
\newtheorem*{thm*}{Theorem}
\newtheorem*{claim*}{Claim}
\newtheorem{remark}[theorem]{Remark}
\newtheorem{example}[theorem]{Example}
\newtheorem{question}[theorem]{Question}
\newtheorem*{question*}{Question}
\newcommand{\R}{\mathbb{R}}
\newcommand{\Z}{\mathbb{Z}}
\newcommand{\N}{\mathbb{N}}
\newcommand{\F}{\mathcal{F}}
\newcommand{\fTilde}{\widetilde{f}}
\newcommand{\eps}{\varepsilon}
\newcommand{\fBar}{\overline{f}}
\newcommand{\FBar}{\overline{F}}
\newcommand{\lo}{\operatorname{span}}
\newcommand{\ord}{\operatorname{ord}}
\newcommand{\ncl}{\operatorname{ncl}}
\newcommand{\im}{\operatorname{Im}}
\newcommand{\abs}[1]{\left| #1 \right|}
\newcommand{\norm}[1]{\left\| #1 \right\|}
\newcommand{\supnorm}[1]{\norm{#1}_{\infty}}
\newcommand{\eqNote}[1]{\stackrel{\text{#1}}{=}}
\newcommand{\Lipz}{\operatorname{Lip}_0}
\newcommand{\absFunc}{\abs{\: \cdot \: }}
\newcommand{\inv}{\operatorname{Inv}}
\newcommand{\ignore}[1]{}
\newtheorem*{acknowledgement}{Acknowledgement}
\title{Almost-invariant elements of group actions on $\Lipz$ spaces}
\author{Tom\'a\v s Raunig}
\email{raunig@karlin.mff.cuni.cz}
\address{Faculty of Mathematics and Physics, Charles University, Sokolovská 83, Prague, 186 00, Czech Republic}
\thanks{The author was supported by the Research grant GAČR 23-04776S}
\date{}
\begin{document}
	
	\begin{abstract}
		This work is motivated by a~question published in E. Glasner's paper \emph{On a question of Kazhdan and Yom Din} regarding the possibility to approximate functionals on a~Banach space which are almost invariant with respect to an~action of a~discrete group by functionals that are invariant. We study the case when the Banach space is a~Lipschitz-free space equipped with an~action induced by an~action by isometries on the underlaying space. We find a~few different conditions sufficient for the answer to be positive; for example the case of free or finitely presented groups endowed with left-invariant metrics acting on themselves by translations. Relations to quasimorphisms are also briefly studied.
	\end{abstract}
	
	\maketitle
	
	\section{Introduction}
	
	The motivation for the present work comes from the paper~\cite{Glasner22}, where a~question posed by Kazhdan and Yom Din is presented and studied. Before stating the question, let us give a~definition fixing the notation used in the paper. Let $G$ be a~group, $X$ be a~Banach space and let $\alpha: G \times X \to X$ be an~action by linear isometries. We define the \emph{dual action} $\alpha^*: G \times X^* \to X^*$ by
	\begin{equation*}
		\alpha^*(g, x^*)(x) = x^*(\alpha(g^{-1}, x)), \quad x\in X, x^* \in X^*, g \in G.
	\end{equation*}
	Or, written using the shorthand notation, $gx^*(x) = x^*(g^{-1}x)$.
	We say that $x^* \in X^*$ is \emph{$G$-invariant} if $gx^* = x^*$ for all $g \in G$. If the group is clear from the context, we may merely say that $x^*$ is invariant.
	
	Now we can state the aforementioned question.
	\begin{question} \label{que:KYD}
		Let $\delta > 0$, $X$ be a~Banach space and let $G$ be a~discrete group acting on $X$ by linear isometries. Suppose that $x^* \in S_{X^*}$ satisfies that $\norm{gx^* - x^*} \leq \frac{\delta}{10}$ for all $g \in G$. Must there exist $G$-invariant $y^* \in X^*$ with $\norm{x^* - y^*} \leq \delta$?
	\end{question}
	
	Notice that the question bears similarities to the well known and widely studied Kazhdan's property (T) (for a~book concerning this property see, e.g., \cite{BHV08}). To simplify notation we introduce
	\begin{definition}
		Let $X$ be a~Banach space, $G$ a~group acting on $X$ by linear isometries and $\delta > 0$. We say that $x^* \in X^*$ is \emph{$\delta$-invariant} (for the action of $G$) if
		\begin{equation*}
			\forall g \in G \colon \norm{gx^* - x^*} \leq \delta.
		\end{equation*}
	\end{definition}
	
	As mentioned, the problem has already been studied in~\cite{Glasner22}. There, it has been shown that the answer in general is negative, but examples where the answer is positive were also provided. These positive cases are the space $X = \ell_1(G)$ with the action induced by translations and actions of amenable groups (and hence, in particular, abelian groups) on any Banach space.
	
	Seeing that the question is not black or white, it is reasonable to ask how will things turn out for specific pairs of groups and Banach spaces. In this paper, we will consider the case $X = \F(M)$, that is, our Banach space is the Lipschitz-free space over some metric space. Note that this choice is related to the aforementioned example of $\ell_1(G)$. For example, if $(M, d) = (\Z, \abs{\cdot})$, then $\F(M)$ is linearly isometric to $\ell_1(\Z)$. More details can be found in~\cite{WeaverLA}.
	
	Let us recall the characteristic properties of Lipschitz-free spaces. Given a~pointed metric space $(M, d, 0)$, there is a~Banach space $\F(M)$ (which is unique up to linear isometry) and an~isometric embedding $\delta: M \to \F(M)$ such that
	\begin{enumerate}
		\item $\delta(0) = 0$;
		\item $\delta(M \setminus \{0\})$ is linearly independent and $\F(M) = \overline{\lo} \ \delta(M)$;
		\item $\delta$ is an~isometry;
		\item if $Y$ is a~Banach space and~$f \in \Lipz(M, Y)$, then there exists a~linear map $T_f: \F(M) \to Y$ satisfying $f = T_f \circ \delta$ and $\norm{T_f} = L(f)$.
	\end{enumerate}
	
	Let us establish what our actions look like. We begin with some action by isometries of a~group $G$ on a~pointed metric space $(M, d, 0)$ (note that we do not require the action to fix the distinguished point $0$). Define $\alpha: G \times \F(M) \to \F(M)$ by setting $\alpha(g, \cdot)$ to be the linearization of the mapping $x \mapsto \delta(gx) - \delta(g0)$ (which is given by the condition (iv) above). Working through the definitions, one then easily checks that this is an~action by linear isometries on $\F(M)$.
	
	It is well-known that $\F(M)^* = \Lipz(M)$ with the norm on the space $\Lipz(M)$ given by the Lipschitz number, i.e. $\norm{f} = L(f), f \in \Lipz(M)$. For this and other results on Lipschitz-free spaces one may see e.g.~\cite{WeaverLA}. Note that by $\norm{\cdot}$ on $\Lipz(M)$ we will always mean this norm. Using this fact, we arrive to an~equivalent formulation of Question~\ref{que:KYD} in the context described above:
	\begin{question} \label{que:KYDLip}
		Let $\delta > 0$, $(M, d, 0)$ be a~pointed metric space and $G$ be a~discrete group acting on $M$ by isometries. Consider the action of $G$ by linear isometries $\alpha: G \times \Lipz(M) \to \Lipz(M)$ given by
		\begin{equation*}
			\alpha(g, f)(x) = f(g^{-1}x) - f(g^{-1}0), \quad g \in G, f \in \Lipz(M), x \in M.
		\end{equation*}
		Assume $f \in \Lipz(M)$ with $L(f) = 1$ is $\delta/10$-invariant with respect to the action~$\alpha$, that is $\norm{\alpha(g, f) - f} = \norm{x \mapsto f(g^{-1}x) - f(x)} \leq \delta/10$ for every $g \in G$.
		Must there exist invariant $\fBar \in \Lipz(M)$ with $\norm{f - \fBar} \leq \delta$?
	\end{question}
	Here it is appropriate to acknowledge that not every isometric action on a~Lipschitz-free space must arise from an~isometric action on the underlaying space as described above. However, for some metric spaces, it does turn out to be the case as was shown in~\cite{CDT24}.
	
	From now on, whenever we have an~action of a~group on a~pointed metric space $M$, we will, unless explicitly stated otherwise, consider $\Lipz(M)$ to be equipped with the action of the group given in the question above. Similarly, we will prefer the shortened notation $gf$ for the action of an~group element $g$ on a~function $f$.
	
	 To end this section, let us briefly go over the structure of this paper and mention some of the results. We note that the formulations below are simplified as not to be too technical. For their full versions, see the appropriate parts of the paper. In Section~\ref{section:Aux} we prepare some technical tooling for the following sections and, among others, we provide more tangible equivalent conditions for ($\delta$-)invariance of a~function. One such, which demonstrates the nature of the problem rather nicely, is
	\begin{theorem*} [\Cref{cor:LipInv}]
		Let $(M, d, 0)$ be a~pointed metric space, $G$ be a~group acting by isometries on $M$, and assume $f \in \Lipz(M)$. Then $f$ is invariant if and only if there is a~homomorphism $H: G \to \R$ such that for any $x \in M$ and $g \in G$ it holds $f(gx) = f(x) + H(g)$.
	\end{theorem*}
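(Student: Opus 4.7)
The plan is to identify the candidate homomorphism explicitly and then verify the two implications by direct computation, using the definition of the action $gf(x) = f(g^{-1}x) - f(g^{-1}0)$.

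For the forward direction, suppose $f$ is invariant. Applying the invariance $gf = f$ to a point $x$ and then substituting $g^{-1}$ for $g$ (which is harmless since invariance holds for all group elements) rewrites the condition as $f(gx) = f(x) + f(g\cdot 0)$ for all $x \in M$ and $g \in G$. This immediately suggests defining $H: G \to \R$ by $H(g) := f(g \cdot 0)$. The desired functional equation $f(gx) = f(x) + H(g)$ then holds by definition. To see $H$ is a homomorphism, specialize the functional equation to $x = h \cdot 0$: $H(gh) = f(gh\cdot 0) = f(h \cdot 0) + H(g) = H(h) + H(g)$.

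For the converse, assume $H : G \to \R$ is a homomorphism with $f(gx) = f(x) + H(g)$. Setting $x = 0$ and using $f(0) = 0$ (which holds since $f \in \Lipz(M)$) gives $H(g) = f(g \cdot 0)$, so $H$ is uniquely determined by $f$. Then $H(g^{-1}) = -H(g) = -f(g \cdot 0)$, and hence
\begin{equation*}
 gf(x) = f(g^{-1}x) - f(g^{-1} \cdot 0) = \bigl(f(x) + H(g^{-1})\bigr) - H(g^{-1}) = f(x),
\end{equation*}
which is precisely invariance.

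I do not anticipate any genuine obstacle here; the only point worth flagging is that one must be careful to track the inverse in the definition of the action, since the natural formula $f(gx) = f(x) + H(g)$ is phrased in terms of $g$ acting on the metric space, while invariance is phrased in terms of $g^{-1}$ through the dual action. Once this bookkeeping is done, the homomorphism identity $H(gh) = H(g) + H(h)$ falls out of the functional equation applied at the point $h \cdot 0$, and the converse is an immediate substitution.
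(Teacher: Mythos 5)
Your proof is correct and takes essentially the same approach as the paper: both define $H(g) = f(g\cdot 0)$, verify the homomorphism identity by evaluating the functional equation at $x = h\cdot 0$, and obtain the converse by direct substitution. The only difference is that you unwind the definition of the dual action $gf(x) = f(g^{-1}x) - f(g^{-1}0)$ directly, whereas the paper first reduces invariance to the statement that $x \mapsto f(gx) - f(x)$ is constant by applying its quantitative \Cref{lma:LipDeltaInv} for every $\delta > 0$; this is a harmless streamlining.
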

	
	In \Cref{sect:main} we provide a~positive answer in case of actions of free-groups on themselves equipped with word-length metrics:
	\begin{theorem*}[\Cref{thm:KYDfreeWord}]
		Let $\delta > 0$ and $F_S$ be a~free group with generating set $S$ equipped with the word metric and action by left translations. Let $f \in \Lipz(F_S)$ be $\delta$-invariant. Then there is an~invariant element $\fBar \in \Lipz(F_S)$ with $\norm{f - \fBar} \leq \delta/2$. Moreover, for any invariant $\fTilde \in \Lipz(F_S)$ we have $\norm{f-\fBar} \leq \norm{f-\fTilde}$.
	\end{theorem*}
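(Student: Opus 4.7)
The plan is to leverage the tree structure of the Cayley graph of $F_S$ and reduce the approximation problem to a one-dimensional optimization per generator. By \Cref{cor:LipInv} applied to the left-translation action of $F_S$ on itself, the invariant elements of $\Lipz(F_S)$ are exactly the (restrictions to $F_S$ of) homomorphisms $H\colon F_S \to \R$ with finite Lipschitz norm; since $F_S$ is free on $S$, such $H$ is determined by an arbitrary bounded family $(h_s)_{s\in S}$ via $h_s := H(s)$, and $L(H) = \sup_s\abs{h_s}$.

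First, I would express the approximation error of any homomorphism-valued $\fBar = H$ in terms of edge increments. Using that every pair of points in $F_S$ is joined by a unique geodesic (the reduced word spelling the difference) and that $H$ is additive along $s$-edges, telescoping $f - H$ along this geodesic yields
\[
\norm{f - H} = \sup_{s \in S,\, g \in F_S} \abs{\psi_s(g) - h_s}, \qquad \psi_s(g) := f(gs) - f(g).
\]
The only subtlety here is correctly tracking the signs contributed by letters that are inverse generators, but this is routine bookkeeping. Next, I would show that $\delta$-invariance of $f$ forces the oscillation of each $\psi_s$ to be at most $\delta$. Indeed, for $g_1,g_2 \in F_S$, setting $h := g_2 g_1^{-1}$ and directly expanding the action formula gives the identity
\[
\psi_s(g_1) - \psi_s(g_2) = (hf - f)(g_2 s) - (hf - f)(g_2),
\]
so $\abs{\psi_s(g_1) - \psi_s(g_2)} \leq \norm{hf - f}\cdot d(g_2 s, g_2) \leq \delta$.

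Given these two ingredients, the proof is essentially forced. Choose $h_s := \tfrac{1}{2}\bigl(\sup_g\psi_s(g) + \inf_g\psi_s(g)\bigr)$ for each $s$; this is well-defined since $\psi_s(e) = f(s)$ is finite and the oscillation of $\psi_s$ is at most $\delta$, and in fact $\abs{h_s} \leq \abs{f(s)} + \delta \leq L(f) + \delta$, so the associated homomorphism $H$ is in $\Lipz(F_S)$. With this choice, $\sup_g\abs{\psi_s(g) - h_s}$ equals half the oscillation of $\psi_s$ and is therefore at most $\delta/2$ for every $s$, so by the first step $\fBar := H$ is invariant with $\norm{f - \fBar} \leq \delta/2$. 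The ``moreover'' clause is immediate from the first step as well: for any invariant $\fTilde$ with corresponding values $\tilde h_s$, one has $\sup_g\abs{\psi_s(g) - \tilde h_s} \geq \tfrac12(\sup_g\psi_s(g) - \inf_g\psi_s(g))$ for each $s$ individually, hence $\norm{f - \fTilde} \geq \norm{f - \fBar}$. I expect the main (mild) obstacle to be the sign bookkeeping in the telescoping formula; once that is in place, the oscillation bound and the midpoint choice finish the argument mechanically.
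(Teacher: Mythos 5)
Your proposal is correct and follows essentially the same route as the paper: your $h_s$ is exactly the paper's $c_f(s,e)$ (the midpoint of $\sup_g$ and $\inf_g$ of $f(gs)-f(g)$), the telescoping along reduced words is the paper's main estimate, and the oscillation bound is the paper's Proposition~\ref{prop:meanProps}. Your explicit reduction of $\norm{f-H}$ to a supremum over generator edges makes the optimality clause a one-line direct argument, which is a slightly cleaner packaging than the paper's proof by contradiction, but the underlying idea is identical.
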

	
	If we replace the free group with a~finitely presented group, we also get a~result in the positive direction, but with the caveat that a~constant depending on the group appears.
	\begin{theorem*} [\Cref{thm:KYDfinPres}]
		Let $G$ be a~finitely presented group equipped with the word metric and action by left translations. Then there exists a~constant $C > 0$ depending on $G$ such that for any $\delta > 0$ and $f \in \Lipz(G)$ $\delta$-invariant there is $\fBar \in \inv_G(G)$ with $\norm{f - \fBar} \leq C\delta$.
	\end{theorem*}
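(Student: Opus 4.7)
The plan is to reduce to the free-group case via a fixed finite presentation. I would fix $G = \langle S \mid R \rangle$, let $\pi : F_S \to G$ be the canonical surjection, and set $L := \max_{r \in R} |r|_{F_S}$. Since the word metrics are compatible, $\pi$ is $1$-Lipschitz and $|g|_G = \min\{|w|_{F_S} : \pi(w) = g\}$; a short argument using left-invariance of the metric shows $L(h \circ \pi) = L(h)$ for every $h \in \Lipz(G)$. Setting $\tilde f := f \circ \pi$, a direct computation from the definition of the action yields $g \tilde f - \tilde f = (\pi(g) f - f) \circ \pi$, so $\tilde f$ is $\delta$-invariant on $F_S$. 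Theorem~\ref{thm:KYDfreeWord} then produces an invariant $\bar{\tilde f} \in \Lipz(F_S)$ with $\|\tilde f - \bar{\tilde f}\| \leq \delta/2$.

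By Corollary~\ref{cor:LipInv}, this invariant element is nothing but a homomorphism $H : F_S \to \R$. Being $\R$-valued, $H$ factors through the abelianization $\Z^S$, so it is encoded by the vector $a = (H(s))_{s \in S} \in \R^S$, and its Lipschitz number as a function on $F_S$ equals $\|a\|_\infty$. For $H$ to descend to a homomorphism $G \to \R$ --- which by Corollary~\ref{cor:LipInv} would be the invariant $\bar f$ we are after --- it suffices (since $H$ is automatically constant on conjugacy classes) that $H(r) = a \cdot \sigma(r) = 0$ for every $r \in R$, where $\sigma(r) \in \Z^S$ is the abelianization of $r$. The key pointwise estimate comes for free: using $\tilde f(r) = f(e_G) = 0$, $\bar{\tilde f}(r) = H(r)$ and $\|\bar{\tilde f} - \tilde f\| \leq \delta/2$, one gets $|H(r)| \leq (\delta/2)|r|_{F_S} \leq L\delta/2$ for every $r \in R$.

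The remaining step is to perturb $a$ to some $a' \in \R^S$ with $a' \cdot \sigma(r) = 0$ for all $r \in R$ while keeping $\|a - a'\|_\infty$ small. This is a pure finite-dimensional linear-algebra problem: letting $V := \lo\{\sigma(r) : r \in R\} \subseteq \R^S$ and $W := V^\perp$, both $[b] \mapsto \|[b]\|_\infty$ and $[b] \mapsto \max_r |b \cdot \sigma(r)|$ are honest norms on the finite-dimensional quotient $\R^S / W$, so there exists a constant $C_G > 0$ depending only on the chosen presentation with $\|[b]\|_\infty \leq C_G \max_r |b \cdot \sigma(r)|$ for every $b \in \R^S$. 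Applied to $b = a$ this yields $a' \in W$ with $\|a - a'\|_\infty \leq C_G L \delta/2$, whose associated homomorphism $H'$ descends to an invariant $\bar f \in \Lipz(G)$. The triangle inequality in $\Lipz(F_S)$, combined with $L(\bar f) = L(\bar f \circ \pi)$, then yields
\begin{equation*}
	\|f - \bar f\|_{\Lipz(G)} \leq \|\tilde f - H\|_{\Lipz(F_S)} + \|H - H'\|_{\Lipz(F_S)} \leq \tfrac{\delta}{2} + C_G L \tfrac{\delta}{2},
\end{equation*}
so $C := (1 + C_G L)/2$ works.

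The hardest point will be this last perturbation step: one has to convert pointwise smallness of $H$ on the finitely many relators (which is what the free-group theorem naturally provides) into $\ell_\infty$-smallness on the generating set (which is what actually governs the Lipschitz norm). The constant $C_G$ encodes genuine geometric information about the presentation --- essentially how efficiently the vectors $\sigma(r)$ span $V$ --- and it is presumably this input that forbids a universal constant and makes the dependence $C = C(G)$ unavoidable in this approach.
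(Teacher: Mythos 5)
Your proposal is correct and follows essentially the same route as the paper: lift $f$ to the free group, apply \Cref{thm:KYDfreeWord} to get a homomorphism close to the lift, observe that this homomorphism is $O(\delta)$ on the relators, and then use a finite-dimensional norm-equivalence argument (which is exactly the content of the paper's \Cref{lma:LA_approxByKernel}, applied to the abelianization operator $A(e_s)(r)=\#(r,s)$) to perturb its values on the generators so that it kills the relators and descends to $G$. The only cosmetic differences are that you derive the bound $\abs{H(r)}\leq \tfrac{\delta}{2}\abs{r}$ directly from $\norm{\tilde f - H}\leq \delta/2$ and $\tilde f(r)=0$ rather than by the paper's telescoping estimate, and you verify the descent by hand instead of invoking \Cref{prop:KYD_onFactor}.
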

	
	On the other end of the spectrum, if the metric significantly shrinks the distances along the orbits of our group, we get the sought after approximation assuming the metric space has favourable structure.
	\begin{theorem*} [\Cref{thm:badGroup}]
		Let $G$ be a~group acting by isometries on a~pointed metric space $(M, d, 0)$. Assume the following conditions:
		\begin{enumerate}
			\item $\{g^n x : n \in \N\}$ is not bi-Lipschitz equivalent to $(\Z, \abs{\cdot})$ for every pair $g \in G, x \in X$;
			\item there is $\alpha \geq 1$ and a~fundamental domain $D$ such that $d(x, y) \leq \alpha d(G x, G y), x, y \in D$.
		\end{enumerate}
		Then for every $\delta > 0$ and $\delta$-invariant $f \in \Lipz(M)$ there is an~invariant $\fBar \in \Lipz(M)$ with $\norm{f - \fBar} \leq (2\alpha + 1)\delta$.
	\end{theorem*}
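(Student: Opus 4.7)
The strategy is to define $\fBar$ as the composition of $f$ with the canonical projection onto the fundamental domain $D$, and then bound $L(f - \fBar)$ in two pieces: the $\delta$-invariance handles most of the discrepancy, and condition~(i) controls the remaining orbit-wise oscillation of~$f$.

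First I would choose $D$ satisfying~(ii) with $0 \in D$ (if necessary, replace the given $D$ by $g_0 D$ for the $g_0 \in G$ moving the representative of $G \cdot 0$ to $0$; this preserves~(ii) since the action is by isometries). Let $\pi \colon M \to D$ be the unique-representative map and define $\fBar(x) := f(\pi(x))$. Then $\fBar$ is constant on each orbit, $\fBar(0) = f(0) = 0$, and condition~(ii) gives
\begin{equation*}
|\fBar(x) - \fBar(y)| \leq d(\pi(x), \pi(y)) \leq \alpha\, d(Gx, Gy) \leq \alpha\, d(x, y),
\end{equation*}
so $\fBar \in \Lipz(M)$ with $L(\fBar) \leq \alpha$. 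By \Cref{cor:LipInv} combined with~(i)---a nonzero homomorphism $H \colon G \to \R$ would force $\{g^n \cdot 0\}$ to be bi-Lipschitz equivalent to $\Z$---every orbit-constant element of $\Lipz(M)$ is actually $G$-invariant, so $\fBar$ is a valid invariant candidate.

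To estimate $L(f - \fBar)$, write $x = g_x \pi(x)$ and $y = g_y \pi(y)$. The assumption $\norm{g_x f - f} \leq \delta$ unfolds to the two-variable inequality $|f(g_x^{-1}u) - f(g_x^{-1}v) - f(u) + f(v)| \leq \delta\, d(u, v)$; setting $u = x$, $v = y$ (so that $g_x^{-1}x = \pi(x)$) gives
\begin{equation*}
(f - \fBar)(x) - (f - \fBar)(y) = \bigl[f(\pi(y)) - f(g_x^{-1}y)\bigr] + \eta, \qquad |\eta| \leq \delta\, d(x, y).
\end{equation*}
The points $\pi(y)$ and $g_x^{-1}y = (g_x^{-1}g_y)\pi(y)$ lie in a common $G$-orbit, and a triangle-inequality computation using the isometric action together with~(ii) yields $d(\pi(y), g_x^{-1}y) \leq (1 + \alpha)\, d(x, y)$.

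The main obstacle is then to bound $|f(a) - f(ha)|$ for $a, ha$ in a common orbit by a suitable multiple of $\delta\, d(a, ha)$, chosen so that the decomposition above delivers exactly $(2\alpha + 1)\delta$. Condition~(i) enters here quantitatively. I plan to iterate the two-variable $\delta$-invariance along the orbit $\{h^n a\}_{n \in \N}$: setting $a_n := f(h^n a)$ one gets $|a_{n+1} - 2 a_n + a_{n-1}| \leq \delta\, d(a, ha)$, hence the increments $b_n := a_{n+1} - a_n$ satisfy $|b_n - b_0| \leq n \delta\, d(a, ha)$ and therefore $|a_n - a_0 - n b_0| \leq \binom{n}{2}\, \delta\, d(a, ha)$. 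If $|b_0|$ were too large relative to $\delta\, d(a, ha)$, this would force a linear lower bound $d(h^n a, a) \gtrsim n |b_0|$ for $n$ in a sufficiently long window, and together with the symmetric lower bound for negative powers (through the isometric action) it would produce a bi-Lipschitz embedding of $\Z$ into the orbit---contradicting~(i). Finite orbits $h^{n_0}a = a$ are handled separately using the identity $\sum_{k < n_0} b_k = 0$. Once this key estimate is in place, combining it with $d(\pi(y), g_x^{-1}y) \leq (1+\alpha)d(x,y)$ and the $\eta$-error from the previous step yields $L(f - \fBar) \leq (2\alpha + 1)\delta$, as claimed.
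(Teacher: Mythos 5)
Your overall architecture is the same as the paper's: define $\fBar$ by transporting $f$ from the fundamental domain along orbits, check it is Lipschitz and invariant (here you do not need condition (i) at all --- $\fBar$ is constant on orbits, so \Cref{cor:LipInv}(iv) applies with $H\equiv 0$), and reduce the norm estimate to the claim that $f$ restricted to a single orbit is $\delta$-Lipschitz with respect to the induced orbit pseudometric. Your decomposition $(f-\fBar)(x)-(f-\fBar)(y)=\bigl[f(\pi(y))-f(g_x^{-1}y)\bigr]+\eta$ is a legitimate (in fact slightly tighter) variant of the paper's split, which instead compares $f(gx)-f(x)$ with $f(gy)-f(y)$ and treats $f(gy)-f(hy)$ as the orbit-wise term; both reductions hinge on the same key lemma, which the paper isolates as \Cref{lma:badZ} and \Cref{lma:badGroup} via \Cref{lma:restrictionOfDeltaInv}. (Minor slips: the bound $\abs{\fBar(x)-\fBar(y)}\leq d(\pi(x),\pi(y))$ is missing a factor $L(f)$, and you should say a word about why the orbit pseudometric may be degenerate.)

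The genuine gap is in your proof of the key orbit-wise estimate. You telescope the second differences to get $\abs{b_n-b_0}\leq n\delta\, d(a,ha)$ and hence $\abs{a_n-a_0-nb_0}\leq\binom{n}{2}\delta\, d(a,ha)$. The accumulated factor of $n$ is fatal: if $\abs{b_0}>C\delta\, d(a,ha)$, the quadratic error overtakes the linear main term after about $2C$ steps, so you only conclude that the orbit looks linear on a \emph{finite} window, which contradicts nothing in condition (i) --- that condition concerns the entire orbit $\{h^na\}_{n\in\N}$, and a bounded orbit containing a long linear-looking window is perfectly possible. The same problem kills the finite-order case: from $\sum_{k<n_0}b_k=0$ and the telescoped bound you only get $\abs{b_0}\leq\tfrac{n_0-1}{2}\delta\, d(a,ha)$. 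And even if the contradiction could be closed, this route yields an orbit-wise Lipschitz constant strictly worse than $\delta$, which destroys the stated constant. The repair is to not telescope: apply the two-variable $\delta$-invariance inequality with the group element $h^n$ at the pair of points $ha$ and $a$ (equivalently, translate the fixed base pair, as in the proof of \Cref{lma:badZ}), obtaining the \emph{uniform} bound $\abs{b_n-b_0}\leq\delta\, d(ha,a)$ for every $n$. Then $\abs{b_0}>\delta\, d(a,ha)$ forces all increments to share a sign with magnitude at least $\abs{b_0}-\delta\, d(a,ha)>0$, so $f$ grows linearly along the whole orbit; since $f$ is Lipschitz, $d(h^na,a)$ is then pinched between two linear functions of $n$, contradicting (i) after the degenerate-pseudometric and finite-order cases are disposed of separately, exactly as in \Cref{lma:badZ}.
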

	For the definition of fundamental domain, see the discussion after~\Cref{thm:badGroup}.
	
	Finally, in \Cref{sect:quasimorphisms}, we see that the notion of almost-invariant function coincides with that of a~partial quasimorphism and use our results to recover part of~\cite[Theorem 1.1]{Kedra}.

	\begin{acknowledgement}
		I would like to thank Michal Doucha for his invaluable insights and comments regarding this paper.
	\end{acknowledgement}

	\section{Auxiliary results} \label{section:Aux}
	
	The aim of this section is to shed some light on (almost-)invariant functionals in the dual of Lipschitz-free spaces, that is, (almost-)invariant Lipschitz functions vanishing at the basepoint. We will provide a~few characterisations of such functions which will find ample use in later sections. Towards the end of the section we show there is a~reasonable way to talk about the ``growth of the function in the~direction of some element of the group''.
	
	Our first task is to find conditions equivalent to the definition of $\delta$-invariance more suitable in the context of Lipschitz functions.
	
	\begin{lemma} \label{lma:LipDeltaInv}
		Let $(M, d, 0)$ be a~pointed metric space and $G$ be a~group acting on $M$ by isometries. Let $\delta > 0$ and $f \in \Lipz(M)$. Then the following conditions are equivalent:
		\begin{enumerate}
			\item $f$ is $\delta$-invariant;
			\item $f - gf$ is $\delta$-Lipschitz for every $g \in G$;
			\item the mapping $x \mapsto f(gx) - f(x)$ is $\delta$-Lipschitz for every $g \in G$;
			\item for any $x, y \in M$ and $g_1, \dots g_n \in G$ holds
			\begin{align*}
				\abs{f(g_1 \cdots g_n x) - f(x) - \sum_{i=1}^{n} (f(g_iy) - f(y))}
				\leq \delta \left(nd(x,y) + \sum_{i=2}^{n} d(g_ix, x)\right).
			\end{align*}
		\end{enumerate}
	\end{lemma}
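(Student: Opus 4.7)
First I would dispatch the easy equivalences. Since $\Lipz(M) = \F(M)^*$ with dual norm equal to the Lipschitz number, the identity $\norm{gf - f} = L(gf - f)$ gives (i) $\Leftrightarrow$ (ii) at once. For (ii) $\Leftrightarrow$ (iii), unfolding the action gives $(gf - f)(z) = f(g^{-1}z) - f(z) - f(g^{-1} \cdot 0)$, which differs from $z \mapsto f(g^{-1}z) - f(z)$ by a constant and thus has the same Lipschitz constant; since $g \mapsto g^{-1}$ is a bijection of $G$, running $g$ through $G$ in (ii) gives (iii). Finally, (iv) $\Rightarrow$ (iii) is the case $n=1$, where the right-hand side reduces to $\delta d(x,y)$.

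The substantive content is (iii) $\Rightarrow$ (iv). The plan is to split the error through the common anchor point $x$:
\begin{equation*}
f(g_1 \cdots g_n x) - f(x) - \sum_{i=1}^{n}(f(g_i y) - f(y)) = E_1 + E_2,
\end{equation*}
with $E_1 = f(g_1 \cdots g_n x) - f(x) - \sum_i (f(g_i x) - f(x))$ and $E_2 = \sum_i (f(g_i x) - f(x)) - \sum_i (f(g_i y) - f(y))$. Applying (iii) to each of the $n$ summands gives $\abs{E_2} \leq n\delta d(x,y)$. For $E_1$ (which is the $y = x$ case of (iv)), I would telescope $f(g_1 \cdots g_n x) - f(x) = \sum_{i=1}^n (f(h_{i-1} g_i x) - f(h_{i-1} x))$ with $h_i := g_1 \cdots g_i$, $h_0 := e$, and then use the identity $f(Hz) = (H^{-1}f)(z) + f(H \cdot 0)$ (from the definition of the action) to rewrite each summand as $(h_{i-1}^{-1} f)(g_i x) - (h_{i-1}^{-1} f)(x)$; the additive constant cancels in the difference. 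Subtracting the elementary increment $f(g_i x) - f(x)$ gives $(h_{i-1}^{-1} f - f)(g_i x) - (h_{i-1}^{-1} f - f)(x)$, and by (ii) applied to $h_{i-1}^{-1}$, the function $h_{i-1}^{-1} f - f$ is $\delta$-Lipschitz, so this term is bounded by $\delta d(g_i x, x)$. The $i = 1$ contribution vanishes because $h_0 = e$, yielding $\abs{E_1} \leq \delta \sum_{i=2}^n d(g_i x, x)$. The triangle inequality then delivers the claimed bound.

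The main obstacle I anticipate is choosing the right telescoping. A more obvious ``right-to-left'' decomposition writing $f(g_1 \cdots g_n x) - f(x) = \sum_i (f(g_i x_i) - f(x_i))$ with $x_i = g_{i+1} \cdots g_n x$ introduces errors of the form $d(g_{i+1} \cdots g_n x, x)$; the only available estimate for these is the iterated triangle inequality $d(g_{i+1} \cdots g_n x, x) \leq \sum_{j=i+1}^n d(g_j x, x)$, which when summed inflates each $d(g_j x, x)$ by a superfluous factor $(j-1)$. The ``left-to-right'' expansion together with conjugating each prefix $h_{i-1}$ onto $f$ is precisely what keeps the error sum linear in the $d(g_i x, x)$ and recovers the sharp constant stated in (iv).
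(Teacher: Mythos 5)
Your proposal is correct and follows essentially the same route as the paper: the same norm/constant-shift observations for (i)$\iff$(ii)$\iff$(iii), and for (iii)$\implies$(iv) the same split through the anchor point $x$ followed by the same left-to-right prefix telescoping $f(g_1\cdots g_n x)-f(x)=\sum_i\bigl(f(h_i x)-f(h_{i-1}x)\bigr)$, with each term estimated by the $\delta$-Lipschitzness of $z\mapsto f(h_{i-1}z)-f(z)$. Your phrasing of that last step via (ii) applied to $h_{i-1}^{-1}$ is only a cosmetic variant of the paper's direct appeal to (iii) with $g'=g_1\cdots g_{i-1}$.
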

	\begin{proof}
		$(i) \iff (ii)$: Recall from~\Cref{que:KYDLip} that a~function $f \in \Lipz(M)$ is $\delta$-invariant if and only if $\norm{f - gf} \leq \delta$ for all $g \in G$. Since the norm on $\Lipz(M)$ we use is the Lipschitz number, the equivalence follows.
		
		$(ii) \iff (iii)$: This equivalence follows from the equality
		\begin{align*}
			&\abs{(g^{-1}f - f)(x) - (g^{-1}f - f)(y)}\\
			&\quad= \abs{f(gx) - f(g0) - f(x) - \big( f(gy) - f(g0) - f(y) \big)}\\
			&\quad= \abs{\big( f(gx) - f(x) \big) - \big( f(gy) - f(y) \big)}
		\end{align*}
		which is valid for any $x, y \in M$ and $g \in G$.
		
		$(iii) \iff (iv)$: Notice that for $n = 1$, $(iv)$ reduces to $(iii)$. So $(iv) \implies (iii)$ and $(iii)$ directly implies $(iv)$ for $n=1$. If $n > 1$, we expand $f(g_1 \cdots g_n x) - f(x)$ into the sum
		\begin{equation*}
			f(g_1 \cdots g_n x) - f(x)
			= \sum_{i=1}^n f(g_1 \cdots g_ix) - f(g_1 \cdots g_{i-1}x),
		\end{equation*}
		where by $f(g_1 \cdots g_{i-1}x)$ for $i = 1$ we understand as just $f(x)$. Now we may write
		\begin{align*}
			&\abs{f(g_1 \cdots g_n x) - f(x) - \sum_{i=1}^{n} (f(g_ix) - f(x))}\\
			&\quad \leq \sum_{i=1}^{n}\abs{\big(f(g_1 \cdots g_ix) - f(g_1 \cdots g_{i-1}x)\big) - \big(f(g_ix) - f(x)\big)}\\
			&\quad = \sum_{i=2}^{n}\abs{\big(f(g_1 \cdots g_{i-1} g_ix) - f(g_ix)\big) - \big(f(g_1 \cdots g_{i-1}x) - f(x)\big)}\\
			&\hspace{3em}+ \abs{f(g_1x) - f(g_1x) - (f(x) - f(x))}\\
			&\quad = \sum_{i=2}^{n}\abs{\big(f(g_1 \cdots g_{i-1} g_ix) - f(g_ix)\big) - \big(f(g_1 \cdots g_{i-1}x) - f(x)\big)}.
		\end{align*}
		For each summand, we use $(iii)$ for $g' = g_1 \cdots g_{i-1}$, $x' = g_i x$ and $y' = x$ to obtain
		\begin{align*}
			\abs{f(g_1 \cdots g_n x) - f(x) - \sum_{i=1}^{n} \big(f(g_ix) - f(x)\big)}
			\leq \sum_{i=2}^n \delta d(g_i x, x).
		\end{align*}
		Finally,
		\begin{align*}
			&\abs{f(g_1 \cdots g_n x) - f(x) - \sum_{i=1}^{n} \big(f(g_iy) - f(y)\big)} \\
			&\ = \abs{f(g_1 \cdots g_n x) - f(x) - \sum_{i=1}^{n} \big(f(g_ix) - f(x) - f(g_ix) + f(x) + f(g_iy) - f(y)\big) }\\
			&\ \leq \sum_{i=2}^n \delta d(g_i x, x) + \sum_{i=1}^n \abs{\big(f(g_ix) - f(x) \big) - \big(f(g_iy) - f(y) \big)}\\
			&\ \leq \delta \left(\sum_{i=2}^n d(g_i x, x) + nd(x,y)\right),
		\end{align*}
		where in the last inequality we have again used $(iii)$ for each summand.
	\end{proof}
	
	Using the previous lemma, we obtain analogous equivalent conditions for invariance.
	\begin{corollary} \label{cor:LipInv}
		Let $(M, d, 0)$ be a~pointed metric space, $G$ be a~group acting by isometries on $M$ and assume $f \in \Lipz(M)$. Then the following are equivalent:
		\begin{enumerate}
			\item $f$ is invariant under the action of $G$;
			\item $f-gf$ is constant for every $g \in G$;
			\item $x \mapsto f(gx) - f(x)$ is constant for every $g \in G$;
			\item there is a~homomorphism $H: G \to \R$ such that for any $x \in M$ and $g \in G$ holds $f(gx) = f(x) + H(g)$.
		\end{enumerate}
	\end{corollary}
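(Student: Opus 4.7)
The plan is to derive the equivalences (i)$\iff$(ii)$\iff$(iii) directly from \Cref{lma:LipDeltaInv} (formally by taking $\delta = 0$), and then to establish (iii)$\iff$(iv) by constructing the homomorphism $H$ explicitly.

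For the first block of equivalences, I would argue as follows. Invariance $gf = f$ is the same as $\norm{f - gf} = 0$; since the norm on $\Lipz(M)$ is the Lipschitz seminorm, this says precisely that $f - gf$ is a constant function. Because both $f$ and $gf$ lie in $\Lipz(M)$ and therefore vanish at $0$, that constant must itself be $0$, which gives (i)$\iff$(ii). The equivalence (ii)$\iff$(iii) then follows from exactly the algebraic identity already used in the proof of \Cref{lma:LipDeltaInv}, namely
\begin{equation*}
(g^{-1}f - f)(x) - (g^{-1}f - f)(y) = \bigl(f(gx) - f(x)\bigr) - \bigl(f(gy) - f(y)\bigr),
\end{equation*}
applied with $g^{-1}$ in place of $g$: the left-hand side is identically zero for all $x, y$ iff the right-hand side is.

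The implication (iv)$\Rightarrow$(iii) is immediate because $H(g)$ does not depend on $x$. Conversely, assuming (iii), I would define $H(g) := f(gx) - f(x)$ using any $x \in M$; by (iii) the right-hand side is independent of the choice of $x$, so $H \colon G \to \R$ is well-defined. The homomorphism property is then a one-line computation exploiting this freedom: evaluating $H(g_1)$ at $x = g_2 0$ and $H(g_2)$ at $x = 0$ yields
\begin{equation*}
H(g_1) + H(g_2) = \bigl(f(g_1 g_2 0) - f(g_2 0)\bigr) + \bigl(f(g_2 0) - f(0)\bigr) = f(g_1 g_2 0) = H(g_1 g_2),
\end{equation*}
using $f(0) = 0$ in the middle step and evaluating $H(g_1 g_2)$ at $x = 0$ for the last equality.

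No genuine obstacle arises in this argument — essentially all the work is already packaged into \Cref{lma:LipDeltaInv}, and the only new ingredient is the homomorphism check, which requires nothing more than the ability to choose the evaluation point freely, as granted by (iii).
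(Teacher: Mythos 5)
Your proposal is correct and follows essentially the same route as the paper: the first three equivalences are obtained by specializing \Cref{lma:LipDeltaInv} to the degenerate case (the paper phrases this as ``$\delta$-invariant for every $\delta>0$'' while you note directly that a vanishing Lipschitz seminorm forces $f-gf$ to be constant, hence zero since both functions vanish at the basepoint), and the homomorphism $H$ is built by evaluating $f(gx)-f(x)$ at the basepoint exactly as in the paper's proof. The only cosmetic difference is your use of the identity from the lemma's proof for (ii)$\iff$(iii) rather than citing the lemma itself; the substance is identical.
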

	\begin{proof}
		First, note that from the definitions it immediately follows that $f$ is invariant if and only if $f$ is $\delta$-invariant for all $\delta > 0$.
		
		$(i) \implies (ii), (iii)$: If $f$ is invariant, then it is $\delta$-invariant for all $\delta > 0$ and the conditions $(ii)$ and $(iii)$ of~\Cref{lma:LipDeltaInv} are satisfied for all $\delta > 0$, implying $(ii)$ and $(iii)$.
		
		$(ii), (iii) \implies (i)$: If $(ii)$ or $(iii)$ holds, then the corresponding condition in~\Cref{lma:LipDeltaInv} holds for all $\delta > 0$. Hence $f$ is $\delta$-invariant for all $\delta > 0$ and thus also invariant.
		
		$(iii) \implies (iv)$: Define $H: G \to \R$ by $H(g) = f(g0), g \in G$. For any $x \in X$ and $g \in G$ we use $(iii)$ to deduce $f(gx) - f(x) = f(g0) - f(0) = H(g)$, which is the desired equality. We just need to show that $H$ is a~homomorphism. To that end, we again use $(iii)$: for any $g, h \in G$ we have
		\begin{equation*}
			H(gh) = f(gh0) = f(gh0) - f(h0) + f(h0) - f(0) = f(g0) - f(0) + f(h0) - f(0) = H(g) + H(h).
		\end{equation*}
		
		$(iv) \implies (iii)$: Fix $g \in G$. For any $x, y \in M$ we have
		\begin{equation*}
			\big(f(gx) - f(x)\big) - \big(f(gy) - f(y) \big) = \big(f(x) + H(g) - f(x)\big) - \big( f(y) - H(g) - f(y) \big) = 0.
		\end{equation*}
	\end{proof}
	
	Later, the case of a~group acting on itself equipped with a~left-invariant metric by translations will be of interest. For that, we have
	
	\begin{corollary} \label{cor:InvForLeftTranslation}
		Let $G$ be a~group endowed with a~left-invariant metric. Then $f \in \Lipz(G)$ is invariant with respect to left-translations if and only if $f: G \to \R$ is a~homomorphism.
	\end{corollary}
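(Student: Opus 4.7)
The plan is to deduce this directly from the characterization in \Cref{cor:LipInv}(iv), exploiting the fact that when the underlying pointed metric space is the group $G$ itself (with the identity $e$ as basepoint, so $f(e)=0$), the characterizing homomorphism $H$ must coincide with $f$.

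More concretely, I would proceed as follows. For the forward direction, suppose $f \in \Lipz(G)$ is invariant under the left-translation action. By \Cref{cor:LipInv}, there exists a homomorphism $H: G \to \R$ such that $f(gx) = f(x) + H(g)$ for every $x \in G$ and $g \in G$. Specializing $x = e$ and using $f(e) = 0$, I get $f(g) = H(g)$ for all $g \in G$, so $f = H$ is itself a homomorphism.

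For the converse, assume $f : G \to \R$ is a homomorphism lying in $\Lipz(G)$. Then for all $g, x \in G$, $f(gx) = f(g) + f(x) = f(x) + f(g)$, so condition (iv) of \Cref{cor:LipInv} is satisfied with the choice $H := f$ (which is a homomorphism by hypothesis). Hence $f$ is invariant.

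I do not expect any real obstacle here: the statement is essentially a restatement of \Cref{cor:LipInv}(iv) in the special case $M = G$ with basepoint $e$, once one observes that the vanishing condition $f(e) = 0$ forces $H = f$. The only thing worth flagging explicitly is the choice of basepoint; if the reader is uncertain, one should remark that $\Lipz(G)$ is taken with respect to $0 = e$, which is the natural choice making left-translations isometries fixing (or rather, being compatible with) the pointed structure.
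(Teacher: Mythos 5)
Your argument is correct and is essentially identical to the paper's proof: both directions reduce to \Cref{cor:LipInv}(iv), with the forward direction obtained by evaluating at $x=e$ and using $f(e)=0$ to conclude $f=H$, and the converse by taking $H=f$. Nothing to add.
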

	\begin{proof}
		By~\Cref{cor:LipInv}, $f$ is invariant if and only if there is a~homomorphism $H: G \to \R$ such that $f(gx) = f(x) + H(g)$ for every $x,g \in G$.
		
		If $f$ is a~homomorphism, then for every $g,x \in G$ we have $f(gx) = f(x) + f(g)$, so we may take $H = f$.
		
		On the other hand, if there is a~homomorphism $H: G \to \R$ such that $f(gx) = f(x) + H(g)$ for every $x,g \in G$, then for any $g \in G$ it holds $f(g) = f(ge) = f(e) + H(g) = H(g)$, so $f = H$ and thus $f$ is a~homomorphism.
	\end{proof}
	
	Next, we define a~quantity representing the mean growth in a~``direction'' of an~element of the group.
	
	\begin{definition}
		Let $G$ be a~group acting on a~metric space $(M, d)$ by isometries and $f \in \Lipz(M)$. Define $c^*_f: G \times M \to \R$, $* \in \{+, -, \emptyset\}$, by
		\begin{align*}
			c^+_f(s, x) &= \sup\limits_{g \in G} f(gsx) - f(gx), \hspace{2cm}
			c^-_f(s, x) = \inf\limits_{g \in G} f(gsx) - f(gx), \\
			c_f(s, x) &= \frac{c^+_f(s,x) + c^-_f(s, x)}{2}.
		\end{align*}
	\end{definition}
	
	If the mapping $f$ is clear from the context, we omit the subscript.
	
	\begin{remark} \label{rem:everythingIsDeltaInv}
		In the definition above, we do not explicitly require $f$ to be $\delta$-invariant for some $\delta$. But since we do require $G$ to act by isometries, we automatically obtain that for every $g \in G$ it is the case that $\norm{f - gf} \leq \norm{f} + \norm{gf} = 2\norm{f}$, i.e. $f$ is $2\norm{f}$-invariant. Note that this is not a~special feature of $\Lipz$ spaces and holds true for any action by isometries on any Banach space.
	\end{remark}
	
	\begin{proposition}\label{prop:meanProps}
		The mappings $c^+_f, c^-_f, c_f$ are well-defined for every $f \in \Lipz(M)$ and if $f$~is moreover $\delta$-invariant for some $\delta > 0$, then for every $x \in M$ and $s \in G$ it holds
		\begin{equation*}
			c_f(s, x) - \frac{\delta}{2} d(sx, x) \leq c^-_f(s, x) \leq c_f(s, x) \leq c^+_f(s, x) \leq c_f(s, x) + \frac{\delta}{2} d(sx, x)
		\end{equation*}
		and $c_f(s, x) = -c_f(s^{-1}, x)$.
	\end{proposition}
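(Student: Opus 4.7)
The plan is to prove well-definedness by a direct Lipschitz bound, then compare $c^+_f(s,x)$ and $c^-_f(s,x)$ via condition (iii) of \Cref{lma:LipDeltaInv}, and finally handle the symmetry $c_f(s,x) = -c_f(s^{-1}, x)$ by a substitution inside the supremum/infimum.

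For well-definedness, I would fix $s \in G$, $x \in M$ and observe that since $G$ acts by isometries, for any $g \in G$
\[
\abs{f(gsx) - f(gx)} \leq L(f) \cdot d(gsx, gx) = L(f) \cdot d(sx, x).
\]
So the quantity $f(gsx) - f(gx)$ stays uniformly bounded in $g$, which makes $c^+_f(s,x)$, $c^-_f(s,x)$ finite, and hence $c_f(s,x)$ is well-defined as well.

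The key inequality is $c^+_f(s,x) - c^-_f(s,x) \leq \delta \cdot d(sx,x)$; everything else in the chain then follows from $c_f$ being the midpoint. To get it, I would take arbitrary $g_1, g_2 \in G$, set $h = g_1 g_2^{-1}$, $y_1 = g_2 s x$, $y_2 = g_2 x$, and apply condition (iii) of \Cref{lma:LipDeltaInv} to $h$:
\[
\abs{\bigl(f(h y_1) - f(y_1)\bigr) - \bigl(f(h y_2) - f(y_2)\bigr)} \leq \delta \cdot d(y_1, y_2) = \delta \cdot d(sx, x),
\]
the last equality using that $g_2$ is an isometry. Rewriting $h y_1 = g_1 s x$ and $h y_2 = g_1 x$, this reads
\[
\abs{\bigl(f(g_1 s x) - f(g_2 s x)\bigr) - \bigl(f(g_1 x) - f(g_2 x)\bigr)} \leq \delta \cdot d(sx, x),
\]
which, after regrouping, says $\abs{(f(g_1 sx) - f(g_1 x)) - (f(g_2 sx) - f(g_2 x))} \leq \delta \cdot d(sx,x)$. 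Taking $\sup$ over $g_1$ and $\inf$ over $g_2$ yields the bound on $c^+_f(s,x) - c^-_f(s,x)$, and the claimed chain then follows from $c_f = (c^+_f + c^-_f)/2$.

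For the identity $c_f(s,x) = -c_f(s^{-1}, x)$, I would perform the reindexing $g' := g s^{-1}$ (equivalently $g = g' s$) inside the sup:
\[
c^+_f(s^{-1}, x) = \sup_{g \in G} \bigl(f(g s^{-1} x) - f(g x)\bigr) = \sup_{g' \in G}\bigl(f(g' x) - f(g' s x)\bigr) = -c^-_f(s, x),
\]
and analogously $c^-_f(s^{-1}, x) = -c^+_f(s, x)$. Averaging gives $c_f(s^{-1}, x) = -c_f(s,x)$. No step here looks genuinely difficult; the only point that requires care is making sure to invoke \Cref{lma:LipDeltaInv}(iii) with the correct $h$ so that the action by isometries cancels $g_2$ inside the metric.
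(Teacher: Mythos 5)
Your proof is correct and follows essentially the same route as the paper: the key inequality $c^+_f - c^-_f \leq \delta\, d(sx,x)$ is obtained exactly as in the paper by applying \Cref{lma:LipDeltaInv}(iii) to the element $g_1 g_2^{-1}$ at the points $g_2 sx$ and $g_2 x$, and the symmetry identity uses the same reindexing $g \rightsquigarrow gs^{-1}$. Your well-definedness argument via the direct bound $\abs{f(gsx)-f(gx)} \leq L(f)\, d(sx,x)$ is in fact slightly cleaner than the paper's, which routes through $\delta$-invariance.
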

	\begin{proof}
		We start by showing that the mappings are well-defined. By the remark above, we may assume that $f$ is $\delta$-invariant for some $\delta > 0$. Let $s \in G$ and $x \in M$. Then
		\begin{align*}
			c^+(s, x)
			&= \sup\limits_{g \in G} \big(f(gsx) - f(gx) - f(sx) + f(x) \big) + f(sx) - f(x)\\
			&\leq \sup\limits_{g \in G} \big( (f(gsx) - f(sx)) - (f(gx) - f(x)) \big) + \norm{f}d(sx,x)\\
			&\leq  (\delta + \norm{f})d(sx,x) < \infty.
		\end{align*}
		Similarly, one deduces $c^+(s, x) \geq -(\delta + \norm{f})d(sx,x) > -\infty$ and the same estimates for $c^-_f$.

		Let $\eps > 0$ and find $g, h \in G$ such that $c^+(s, x) \leq f(gsx) - f(gx) + \eps$ and $c^-(s, x) \geq f(hsx) - f(hx) - \eps$. We estimate
		\begin{align*}
			c^+(s,x) - c^-(s,x)
			&\leq f(gsx) - f(gx) - f(hsx) + f(hx) + 2\eps\\
			&= \big( f(gh^{-1} (hsx)) - f(hsx) \big) - \big( f(gh^{-1} (hx)) - f(hx) \big) + 2\eps \\
			&\leq \delta d(hsx, hx) + 2\eps \\
			&= \delta d(sx, x) + 2\eps.
		\end{align*}
		As $\eps > 0$ was arbitrary, we obtain $\abs{c^+(s,x) - c^-(s,x)} = c^+(s,x) - c^-(s,x) \leq \delta d(sx, x)$. Using the properties of the mean, we obtain the required inequalities.
		
		It remains to show that $c(s, x) = -c(s^{-1}, x)$. To that end, calculate
		\begin{align*}
			c^+(s, x)
			&= \sup_{g \in G} f(gsx) - f(gx)
			\eqNote{$g \rightsquigarrow gs^{-1}$} \sup_{g \in G} f(gs^{-1}sx) - f(gs^{-1}x)
			= \sup_{g \in G} -\big(f(gs^{-1}x) - f(gx)\big)\\
			&= -\inf_{g \in G} f(gs^{-1}x) - f(gx)
			= -c^-(s^{-1}, x).
		\end{align*}
		Analogously, one may obtain $c^-(s, x) = -c^+(s, x)$. Hence
		\begin{equation*}
			c(s, x) = \frac{c^+(s, x) + c^-(s, x)}{2} = \frac{-c^-(s^{-1}, x) - c^+(s^{-1}, x)}{2} = -c(s^{-1}, x).
		\end{equation*}
	\end{proof}
	
	Again, for the case of a~group acting on itself, we prepare
	
	\begin{proposition} \label{prop:deltaInvEquivToDeltaC}
		Let $G$ be a~group equipped with a~left-invariant metric and the action by left translations on itself and let $\delta > 0$. Then $f \in \Lipz(G)$ is $\delta$-invariant if and only if
		\begin{equation*}
			\forall s \in G: c_f^+(s, e) - c_f^-(s, e) \leq \delta d(s, e).
		\end{equation*}
	\end{proposition}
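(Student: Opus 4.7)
The plan is to verify both directions using the characterizations from Lemma~\ref{lma:LipDeltaInv} and the estimates from Proposition~\ref{prop:meanProps}. The forward direction is essentially a one-liner: if $f$ is $\delta$-invariant, then Proposition~\ref{prop:meanProps} gives $c_f^+(s,x) - c_f^-(s,x) \leq \delta \, d(sx, x)$ for every $s \in G$ and $x \in M$, and specializing to $x = e$ together with $d(se, e) = d(s, e)$ yields the claimed inequality.

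For the converse, the plan is to verify condition~(iii) of Lemma~\ref{lma:LipDeltaInv}, namely that for every $s \in G$ the map $x \mapsto f(sx) - f(x)$ is $\delta$-Lipschitz. Fix $s \in G$ and $x, y \in G$. The key trick I would use is to realize both $f(sy) - f(sx)$ and $f(y) - f(x)$ as values of the form $f(g \cdot r) - f(g)$ for one \emph{common} direction, chosen to be $r := x^{-1}y$. Writing $sy = (sx) \cdot r$ and $y = x \cdot r$ one reads off
\begin{equation*}
  f(sy) - f(sx) = f((sx) \cdot r) - f(sx), \qquad f(y) - f(x) = f(x \cdot r) - f(x),
\end{equation*}
so that both numbers lie in the interval $[c_f^-(r, e),\, c_f^+(r, e)]$.

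Left-invariance of the metric then gives $d(r, e) = d(x^{-1}y, e) = d(y, x)$, so combining with the hypothesis I would conclude
\begin{equation*}
  \abs{(f(sy) - f(y)) - (f(sx) - f(x))} \leq c_f^+(r, e) - c_f^-(r, e) \leq \delta \, d(r, e) = \delta \, d(x, y),
\end{equation*}
which is precisely the $\delta$-Lipschitz estimate for $x \mapsto f(sx) - f(x)$; by Lemma~\ref{lma:LipDeltaInv}(iii) this shows $f$ is $\delta$-invariant. I do not anticipate any real obstacle: the whole argument collapses to the algebraic observation that the two differences can be expressed via the same shift $r$, a feature that relies essentially on the action being by left translations and the metric being left-invariant.
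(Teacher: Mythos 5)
Your proposal is correct and follows essentially the same route as the paper: the forward direction specializes Proposition~\ref{prop:meanProps} to $x=e$, and the converse rewrites both increments $f(sy)-f(sx)$ and $f(y)-f(x)$ as right-shifts by the common element $r=x^{-1}y$ so that both lie in $[c_f^-(r,e), c_f^+(r,e)]$, then invokes Lemma~\ref{lma:LipDeltaInv}(iii) and left-invariance of the metric. This is exactly the paper's argument up to relabelling.
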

	\begin{proof}
		If $f$ is $\delta$-invariant, the claim follows from \Cref{prop:meanProps} by setting $x = e$.
		
		To prove the other implication, by \Cref{lma:LipDeltaInv}, it is enough to check that $x \mapsto f(g^{-1}x) - f(x)$ is $\delta$-Lipschitz. So, let $g,x,y \in G$. We have
		\begin{align*}
			&c_f^+(s, e) = \sup_{h\in G} f(hse) - f(he) = \sup_{h\in G} f(hs) - f(h)\\
			&c_f^-(s, e) = \inf_{h\in G} f(hse) - f(he) = \inf_{h\in G} f(hs) - f(h).
		\end{align*}
		We estimate
		\begin{align*}
			\big(f(gx) - f(x)\big) - \big(f(gy) - f(y)\big)
			&= \big(f(gx) - f(gy)\big) - \big(f(x) - f(y)\big)\\
			&=\big(f(gyy^{-1}x) - f(gy)\big) - \big(f(yy^{-1}x) - f(y)\big)\\
			& \leq c_f^+(y^{-1}x, e) - c_f^-(y^{-1}x, e) \leq \delta d(y^{-1}x, e) = \delta d(x, y).
		\end{align*}
		The estimate by $-\delta d(x, y)$ is obtained by switching the roles of $c_f^+(y^{-1}x, e)$ and $c_f^-(y^{-1}x, e)$.
	\end{proof}
	
	\ignore{
		\textcolor{red}{zbytek sekce je k nicemu, jsou tady jenom protoze jsem si je uz rozmyslel, tak kdyby se nahodou hodily}
		
		\begin{proposition}
			Let $G$ be a~group acting on a~metric space $(M, d)$ by isometries, $\delta > 0$ and $f \in \Lipz(M)$ be $\delta$-invariant. Then $c_f^+(s,x) - c_f^-(s, x) \leq \delta d(sx, x)$ for all $x \in M$ and $s \in G$.
		\end{proposition}
		\begin{proof}
			If $f$ is $\delta$-invariant, \Cref{prop:meanProps} implies condition $(i)$. Fix $s \in G$ and let $x, y \in M$. Without loss of generality assume that $c^+_f(s, x) \geq c^+_f(s, y)$. Let $\eps > 0$ and find $h \in G$ so that $f(hsx) - f(hx) \geq c_f^+(s, x) - \eps$. We have
			\begin{align*}
				c^+_f(s, x) - c^+_f(s, y)
				&\leq f(hsx) - f(hx) + \eps - c^+_f(s, y) \\
				&\leq f(hsx) - f(hx) + \eps - \big( f(hsy) - f(hy) \big)\\
				&= f(hsh^{-1}hx) - f(hx) + \eps - \big( f(hsh^{-1}hy) - f(hy) \big)\\
				&\leq \delta d(hx, hy) + \eps\\
				&= \delta d(x, y) + \eps,
			\end{align*}
			where in the second to last step we use \Cref{lma:LipDeltaInv} $(iii)$ with $g = hsh^{-1}$ for $hx$ and $hy$.
			
			For $c_f^-$ we instead without loss of generality assume that $c^-_f(s, x) \geq c^-_f(s, y)$. Let $\eps > 0$ and choose $h \in G$ such that $f(hsy) - f(hy) - \eps \leq c_f^-(s, y)$. Then, as above, we estimate
			\begin{align*}
				c^-_f(s, x) - c^-_f(s, y)
				&\leq c^-_f(s, x) - \big( f(hsy) - f(hy) \big) + \eps\\
				&\leq f(hsx) - f(hx) - \big(f(hsy) - f(hy)\big) + \eps\\
				&\leq \delta d(x, y) + \eps.
			\end{align*}
		\end{proof}
		
		\begin{proposition}
			Consider $\Z$ equipped with a~translation invariant metric $d$ satisfying $d(0, 1) = 1$ and let $c > 0$ be such that there exists $z \in \Z$ such that $d(0, z)/ \abs{z} < c$. Let $\delta > 0$ and $f \in \Lipz(\Z), \norm{f} \leq 1$ be $\delta$-invariant. Then $-c-\delta \leq c^-_f(1,0) \leq c^+_f(1,0) \leq c+\delta$.
		\end{proposition}
		\begin{proof}
			As $d$ is translation invariant, we may without loss of generality assume that $z > 0$.
			We start by proving the following claim.\\
			\textbf{Claim}: $\forall \eta > 0 \ \exists n_0 \in \N \ \forall n \geq n_0 : \frac{d(0, n)}{\abs{n}} < (1+\eta)c$.
			
			Choose $n_0 \in \N$ such that $z/n_0 < \eta c$. Let $n \geq n_0$ and find $a \in \N$ and $0 \leq b < z$ such that $n = az + b$. From triangle inequality follows $d(0, n) \leq a d(0, z) + d(0, b)$ and hence we have
			\begin{align*}
				\frac{d(0, n)}{\abs{n}}
				&\leq \frac{a d(0, z) + d(0, b)}{a z + b}
				\leq \frac{d(0, z)}{z} + \frac{d(0, b)}{az+b}
				< c + \frac{d(0, b)}{az+b}
				\leq c + \frac{b}{az+b}
				< (1+\eta)c,
			\end{align*}
			where the last inequality follows from the choice of $n_0$: $\frac{b}{az+b} \leq \frac{z}{n} \leq \eta c$. Thus, the claim is proven.
			
			Denote, for the sake of brevity, $c^+ = c^+_f(1,0)$ and $c^- = c^-_f(1,0)$.
			Let $\eps > 0$ and for contradiction, assume that $c^+ > (1+\eps)c + \delta$. Then, by \Cref{prop:meanProps}, $c^- \geq (1 + \eps)c$ and
			\begin{equation*}
				f(n) = \sum_{i=1}^n f(i) - f(i-1) \geq \sum_{i=1}^n c^- \geq (1+\eps)cn.
			\end{equation*}
			For $\eta = \eps/2$ find $n_0 \in \N$ using the claim. Then we have
			\begin{equation*}
				\norm{f}
				\geq \frac{f(n)}{d(0, n)}
				= \frac{f(n)}{n} \cdot \frac{n}{d(0, n)}
				> \frac{(1+\eps)cn}{n} \frac{n}{d(0, n)}
				\geq \frac{(1+\eps)cn}{n} \frac{1}{(1+\eps/2)c}
				= \frac{1+\eps}{1+\eps/2}
				> 1
			\end{equation*}
			which is a~contradiction. Hence $c^+ \leq c + \delta$.
			
			By an~analogous argument, or by considering the function $-f$ instead, we obtain $c^- \geq -c -\delta$.
		\end{proof}
		
	}
	
	\section{Main results} \label{sect:main}
	
	\begin{lemma} \label{lma:LNforHomo}
		Let $G$ be a~group equipped with a~left-invariant metric $d$ and the action on itself by left-translations. If $h: G \to \R$ is a~homomorphism, then
		\begin{equation*}
			L(h) = \sup_{e \neq g \in G} \frac{\abs{h(g)}}{d(g, e)}.
		\end{equation*}
	\end{lemma}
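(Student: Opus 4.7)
The plan is to leverage two symmetries: the left-invariance of $d$ and the homomorphism property of $h$. These should let me collapse a generic Lipschitz quotient involving a pair of points into a quotient involving a single group element and the neutral element.

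First I would observe the trivial inequality $L(h) \geq \sup_{g \neq e} \frac{|h(g)|}{d(g,e)}$, which follows immediately from the definition of the Lipschitz number by restricting to pairs of the form $(g, e)$ with $g \neq e$. The substantive direction is the reverse inequality. For any $x \neq y$ in $G$, set $g := y^{-1} x$, so that $g \neq e$ and $x = yg$. Using that $h$ is a homomorphism, compute
\begin{equation*}
    h(x) - h(y) = h(yg) - h(y) = h(y) + h(g) - h(y) = h(g).
\end{equation*}
Using left-invariance of $d$, compute
\begin{equation*}
    d(x, y) = d(yg, y) = d(g, e),
\end{equation*}
where in the last step we applied the isometry $z \mapsto y^{-1} z$. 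Hence
\begin{equation*}
    \frac{|h(x) - h(y)|}{d(x,y)} = \frac{|h(g)|}{d(g, e)} \leq \sup_{e \neq g' \in G} \frac{|h(g')|}{d(g', e)},
\end{equation*}
and taking the supremum over all $x \neq y$ yields the desired bound.

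Combining the two inequalities gives equality. There is no real obstacle here: both the left-invariance of $d$ and the homomorphism property of $h$ do exactly the work of reducing the two-variable quotient to a one-variable one, and the identity holds regardless of whether $L(h)$ is finite or infinite (in the latter case the equality is $\infty = \infty$).
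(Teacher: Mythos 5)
Your proposal is correct and follows essentially the same route as the paper: the trivial inequality by restricting to pairs $(g,e)$, and for the converse, using the homomorphism property to rewrite $h(x)-h(y)$ as $h(y^{-1}x)$ and left-invariance to rewrite $d(x,y)$ as $d(y^{-1}x,e)$. The remark about the identity holding even when $L(h)=\infty$ is a fine (if unnecessary) bonus.
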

	\begin{proof}
		By definition of the Lipschitz number, $\sup_{e \neq g \in G} \frac{\abs{h(g)}}{d(g, e)} \leq L(h)$. To show the converse inequality, let $g, g' \in G$, $g \neq g'$ and calculate
		\begin{equation*}
			\frac{\abs{h(g) - h(g')}}{d(g, g')}
			= \frac{\abs{h(g^{-1}g')}}{d(g, g')}
			= \frac{\abs{h(g^{-1}g')}}{d(g^{-1}g', e)}
			\leq \sup_{e \neq k \in G} \frac{\abs{h(k)}}{d(k, e)}.
		\end{equation*}
		Taking the supremum over all $g \neq g'$ yields the inequality.
	\end{proof}
	
	\begin{theorem} \label{thm:KYDfreeWord}
		Let $\delta > 0$ and $G = F_S$ be a~free group with generating set $S$ equipped with the word metric and action by left-translations. Let $f \in \Lipz(F_S)$ be $\delta$-invariant. Then, denoting $c(s) = c_f(s, e)$ for $s \in G$, the mapping $\fBar: F_S \to \R$ defined by
		\begin{equation*}
			\fBar(g) = \sum_{i=1}^n c(s_i),
		\end{equation*}
		where $n \in \N$ and $s \in (S^{\pm1})^n$ are such that $g = s_1 \cdots s_n$ is the reduced word representing $g$, is an~invariant element of $\Lipz(F_S)$ with $\norm{f - \fBar} \leq \delta/2$. Moreover, for any invariant $\fTilde \in \Lipz(F_S)$ we have $\norm{f-\fBar} \leq \norm{f-\fTilde}$.
	\end{theorem}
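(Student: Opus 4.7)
The strategy is to recognize $\fBar$ as the unique homomorphism $F_S \to \R$ extending the midpoint function $s \mapsto c(s)$ on generators, and to exploit that each generator has word length $1$ so the bound from \Cref{prop:meanProps} becomes $c^+(s,e) - c^-(s,e) \leq \delta$.

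\emph{Step 1: $\fBar$ is a well-defined invariant element of $\Lipz(F_S)$.} Since $F_S$ is free on $S$, the assignment $s \mapsto c(s)$, $s \in S$, extends uniquely to a homomorphism $H \colon F_S \to \R$. The identity $c(s^{-1}) = -c(s)$ from \Cref{prop:meanProps} gives $H(s^{-1}) = -c(s) = c(s^{-1})$, so for any reduced word $g = s_1 \cdots s_n$ with $s_i \in S^{\pm 1}$ we get $H(g) = \sum_i c(s_i) = \fBar(g)$, showing that $\fBar$ is well defined and $\fBar(e) = 0$. Once Lipschitz-ness is verified in Step 2, invariance follows from \Cref{cor:InvForLeftTranslation}.

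\emph{Step 2: $L(f - \fBar) \leq \delta/2$.} The main ingredient is a one-letter estimate: for every $s \in S^{\pm 1}$ and every $g \in F_S$,
\begin{equation*}
\bigl| f(gs) - f(g) - c(s) \bigr| \leq \frac{c^+(s,e) - c^-(s,e)}{2} \leq \frac{\delta}{2},
\end{equation*}
the first inequality being immediate from the definitions of $c^\pm$ and $c$, the second from \Cref{prop:meanProps} since $d(s,e) = 1$. Now fix $g, g' \in F_S$ and write $g^{-1} g' = s_1 \cdots s_n$ in reduced form, so $d(g,g') = n$ by left-invariance. Telescoping along this word and using that $\fBar$ is a homomorphism,
\begin{align*}
(f - \fBar)(g') - (f - \fBar)(g) = \sum_{i=1}^n \bigl[ f(g s_1 \cdots s_i) - f(g s_1 \cdots s_{i-1}) - c(s_i) \bigr].
\end{align*}
Applying the one-letter estimate to each summand yields $|(f-\fBar)(g') - (f-\fBar)(g)| \leq n \delta/2 = (\delta/2) d(g,g')$, so $L(f - \fBar) \leq \delta/2$. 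In particular $\fBar \in \Lipz(F_S)$ via $L(\fBar) \leq L(f) + L(f - \fBar)$.

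\emph{Step 3: Optimality.} Let $\fTilde \in \Lipz(F_S)$ be invariant; by \Cref{cor:InvForLeftTranslation} it is a homomorphism. For $s \in S^{\pm 1}$ and $g \in F_S$,
\begin{equation*}
(f - \fTilde)(gs) - (f - \fTilde)(g) = f(gs) - f(g) - \fTilde(s),
\end{equation*}
so taking $\sup_g$ and $\inf_g$ and recalling $d(gs,g) = 1$ gives $c^+(s,e) - \fTilde(s) \leq L(f - \fTilde)$ and $\fTilde(s) - c^-(s,e) \leq L(f - \fTilde)$. Adding these and halving yields $(c^+(s,e) - c^-(s,e))/2 \leq L(f - \fTilde)$ for every $s \in S^{\pm 1}$. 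Combined with the sharper form of Step 2, namely $L(f - \fBar) \leq \sup_{s \in S^{\pm 1}} (c^+(s,e) - c^-(s,e))/2$, this gives $\norm{f - \fBar} \leq \norm{f - \fTilde}$.

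The main obstacle is keeping the dual roles of $\fBar$ honest: the definition via reduced words is useful for the telescoping estimate in Step 2, while the homomorphism property (required for Step 3 and for invariance) is what makes the two viewpoints compatible. The hypothesis that $G$ is \emph{free} is used precisely at this junction, as freeness together with $c(s^{-1}) = -c(s)$ is exactly what makes the prescription $s \mapsto c(s)$ extend to an $\R$-valued homomorphism on all of $G$; for a group with relations, this step could fail and is morally what forces the extra constant in \Cref{thm:KYDfinPres}.
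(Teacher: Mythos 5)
Your proof is correct, and Steps 1 and 2 follow the paper's argument essentially verbatim: $\fBar$ is the homomorphic extension of $s \mapsto c(s)$ (well defined thanks to $c(s^{-1}) = -c(s)$ from \Cref{prop:meanProps}), and the bound $\norm{f-\fBar} \leq \delta/2$ comes from telescoping along the reduced word for $g^{-1}g'$ together with the one-letter estimate $\abs{f(gs)-f(g)-c(s)} \leq (c^+(s,e)-c^-(s,e))/2 \leq \delta/2$. Where you diverge is the optimality claim. The paper argues by contradiction: assuming $\norm{f-\fBar} > \norm{f-\fTilde}$, it extracts a pair $x,s$ with $s$ a \emph{general} group element and runs the comparison of $(c^+(s)-c^-(s))/d(s,e)$ against $2\sup_g \abs{f(gs)-f(g)-\fTilde(s)}/d(s,e)$; that computation identifies $\fBar(s)$ with $c(s)$, which is literally valid only for $s \in S^{\pm1}$ (the reduction to generators is available via the Step 2 telescoping but is left implicit there). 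You instead argue directly and only on generators: taking $\sup$ and $\inf$ over $g$ of $f(gs)-f(g)-\fTilde(s)$ gives $(c^+(s,e)-c^-(s,e))/2 \leq L(f-\fTilde)$ for each $s \in S^{\pm1}$, which you combine with the generator-wise refinement $L(f-\fBar) \leq \sup_{s\in S^{\pm1}} (c^+(s,e)-c^-(s,e))/2$. This is the same underlying comparison, but your route never needs to know anything about $c$ on non-generators (in particular, nothing about additivity of $c$), and is if anything cleaner. One presentational nit: you invoke ``the sharper form of Step 2'' without having displayed it; it does follow immediately from your telescoping, but you should state it explicitly at the end of Step 2 rather than retroactively.
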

	\begin{proof}
		The mapping $\fBar$ is a~homomorphism and hence, by \Cref{lma:LNforHomo}, it is Lipschitz. By \Cref{cor:InvForLeftTranslation}, it is invariant. Let $g, h \in G$ and $g^{-1}h = s_1\cdots s_n$ be the reduced word representing $g^{-1}h$. Then $-\fBar(g) + \fBar(h) = \fBar(g^{-1}h) = \sum_{i=1}^n c(s_i)$. We have
		\begin{align*}
			\abs{(f-\fBar)(g) - (f-\fBar)(h)}
			&= \abs{f(g) - f(gg^{-1}h) - \sum_{i=1}^n c(s_i)}\\
			&= \abs{f(g) - f(gs_1\cdots s_n) - \sum_{i=1}^n c(s_i)}
		\end{align*}
		Defining $s_0 = e$ and using Proposition~\ref{prop:meanProps} for $x_i = g s_0 \cdots s_{i-1}$, we have
		\begin{align*}
			\abs{(f-\fBar)(g) - (f-\fBar)(h)}
			&\leq \sum_{i=1}^n \abs{f(g s_0 \cdots s_{i-1}) - f(g s_0 \cdots s_{i}) - c(s_i)}
			\leq \sum_{i=1}^n \frac{\delta}{2}
			= n\frac{\delta}{2}.
		\end{align*}
		
		To show optimality of $\fBar$, assume for contradiction that $\norm{\fBar - f} > \norm{\fTilde - f}$. There are $x, s \in G$ such that
		\begin{equation*}
			\frac{\abs{f(xs) - f(x) - c(s)}}{d(s,e)}
			= \frac{\abs{f(xs) - f(x) - \fBar(s)}}{d(s,e)}
			> \norm{\fTilde - f}
			\geq \sup_{g \in G} \frac{\abs{f(gs) - f(g) - \fTilde(s)}}{d(s,e)}.
		\end{equation*}
		Observe that by definition of $c(s)$, we have $\abs{f(xs) - f(x) - c(s)} \leq (c^+(s) - c^-(s))/2$.
		Now, we obtain an~estimate
		\begin{align*}
			\frac{c^+(s) - c^-(s)}{d(s,e)}
			&= \sup_{g \in G} \frac{f(gs) - f(g)}{d(s,e)} - \inf_{g \in G} \frac{f(gs) - f(g)}{d(s,e)}\\
			&= \sup_{g \in G} \frac{f(gs) - f(g) - \fTilde(s)}{d(s,e)} - \inf_{g \in G} \frac{f(gs) - f(g) - \fTilde(s)}{d(s,e)}\\
			&\leq 2\sup_{g \in G} \frac{\abs{f(gs) - f(g) - \fTilde(s)}}{d(s,e)}\\
			&< 2 \frac{\abs{f(xs) - f(x) - c(s)}}{d(s,e)}
			\leq \frac{c^+(s) - c^-(s)}{d(s,e)}.
		\end{align*}
		Because the inequality in the second to last step is strict, we have arrived to a~contradiction.
	\end{proof}
	
	The constant in \Cref{thm:KYDfreeWord} is optimal as demonstrated by the following example.
	\begin{example}
		Let $\delta > 0$ and consider $F_{\{1\}} = \Z$ equipped with the word length metric, i.e. $d(x, y) = \abs{x-y}, x,y\in\Z$. Define $f \in \Lipz(\Z)$ by $f(x) = 0, x \leq 0$ and $f(x) = \delta x, x > 0$. Then $f$ is $\delta$-invariant and $\fBar(x) = \frac{\delta}{2}x$ and $\norm{f-\fBar} = \delta/2$.
	\end{example}
	\begin{proof}
		Clearly, $f$ is $\delta$-Lipschitz. For $s \geq 0$ we have $c^+(s, 0) = \delta$ and $c^-(s, 0) = 0$. Conversely, for $s < 0$ we have $c^+(s, 0) = 0$ and $c^-(s, 0) = -\delta$. Either way, $c^+(s, 0) - c^-(s, 0) = \delta$ and so, by \Cref{prop:deltaInvEquivToDeltaC}, $f$ is $\delta$-invariant. Moreover, $c(1, 0) = \delta/2$, so $\fBar(x) = \frac{\delta}{2}x$ and we can calculate
		\begin{equation*}
			\norm{f - \fBar} = \norm{x \mapsto \frac{\delta}{2}x} = \frac{\delta}{2}.
		\end{equation*}
	\end{proof}
	
	Later on we will need finer control over the exact values of the invariant function by which we approximate. To enable this, we have
	
	\begin{corollary} \label{cor:KYD_FreeAdj}
		Let $\delta > 0$ and $F_S$ be a~free group with free generating set $S$. Equip $F_S$ with the word metric and the action on itself by left-translations. Let $f \in \Lipz(F_S)$ be $\delta$-invariant. Let $u \in \ell_\infty(S)$ and $\eta \geq 0$ satisfy $\abs{c_f(s, e) - u(s)} \leq \eta, s \in S$. Define $\fBar: F_S \to \R$ by
		\begin{equation*}
			\fBar(s_1^{a_1} \dots s_n^{a_n}) = \sum_{i=1}^{n} a_i u(s_i), \quad n \in \N, s \in S^n, a \in \{-1,1\}^n.
		\end{equation*}
		Then $\fBar \in \inv_{F_S}(F_S)$ and $\norm{f - \fBar} \leq \frac{\delta}{2} + \eta$.
	\end{corollary}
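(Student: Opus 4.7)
The plan is to combine \Cref{thm:KYDfreeWord} with a triangle inequality, using $\bar f$ as a controlled perturbation of the canonical invariant approximant from that theorem.

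First I would apply \Cref{thm:KYDfreeWord} directly to $f$, producing the invariant element $f^\star \in \Lipz(F_S)$ defined on a reduced word $g = s_1 \cdots s_n$ (with $s_i \in S^{\pm 1}$) by $f^\star(g) = \sum_{i=1}^n c_f(s_i, e)$, and satisfying $\|f - f^\star\| \leq \delta/2$. Next I would observe that $\bar f$ is a~homomorphism: by the universal property of the free group $F_S$, the function $u: S \to \R$ extends uniquely to a~homomorphism $F_S \to \R$, and evaluating this homomorphism on the reduced word $s_1^{a_1}\cdots s_n^{a_n}$ yields precisely $\sum a_i u(s_i)$, so the formula in the statement defines a~homomorphism. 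By \Cref{cor:InvForLeftTranslation} this gives $\bar f \in \inv_{F_S}(F_S)$.

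The heart of the proof is then to bound $\|f^\star - \bar f\|$. Using that $c_f(s^{-1}, e) = -c_f(s, e)$ from \Cref{prop:meanProps}, one checks that $f^\star$ is itself a~homomorphism, hence so is $f^\star - \bar f$. By \Cref{lma:LNforHomo},
\begin{equation*}
  \|f^\star - \bar f\| = \sup_{e \neq g \in F_S} \frac{|(f^\star - \bar f)(g)|}{d(g,e)}.
\end{equation*}
For a~reduced word $g = s_1^{a_1}\cdots s_n^{a_n}$ with $s_i \in S$ and $a_i \in \{-1,1\}$, one has $d(g,e) = n$ and
\begin{equation*}
  (f^\star - \bar f)(g) = \sum_{i=1}^n a_i \bigl(c_f(s_i, e) - u(s_i)\bigr),
\end{equation*}
where I used $c_f(s_i^{-1}, e) = -c_f(s_i,e)$ and $\bar f(s_i^{-1}) = -u(s_i)$. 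The hypothesis $|c_f(s,e) - u(s)| \leq \eta$ for $s \in S$ then gives $|(f^\star - \bar f)(g)| \leq n\eta = \eta\,d(g,e)$, so $\|f^\star - \bar f\| \leq \eta$.

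Finally, the triangle inequality yields $\|f - \bar f\| \leq \|f - f^\star\| + \|f^\star - \bar f\| \leq \delta/2 + \eta$. I do not expect any real obstacle here; the only subtlety is verifying that both $f^\star$ and $\bar f$ are genuine homomorphisms (so that \Cref{lma:LNforHomo} applies and reduces the norm computation to values on single generators), and carefully tracking signs when comparing them on $s^{-1}$.
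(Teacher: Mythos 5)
Your proposal is correct and follows essentially the same route as the paper: both introduce the invariant approximant from Theorem~\ref{thm:KYDfreeWord}, bound its distance to $\fBar$ by $\eta$ via Lemma~\ref{lma:LNforHomo} (using $c_f(s^{-1},e)=-c_f(s,e)$ from Proposition~\ref{prop:meanProps} to handle the signs), and conclude by the triangle inequality. No substantive differences.
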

	\begin{proof}
		The mapping $\fBar$ is a~homomorphism, because $F_S$ is a~free group. Denote $\fTilde(g) = \sum_{i=1}^n a_i c_f(s_i, e)$ the function from \Cref{thm:KYDfreeWord} (here, we consider only $s_i \in S$, hence the need for the coefficients $a_i$; the functions are indeed the same by \Cref{prop:meanProps}).
		For $g \in G$ and its representation as a~reduced word $g = s_1^{a_1} \cdots s_n^{a_n}$ we calculate
		\begin{equation*}
			\frac{\abs{\fTilde(g) - \fBar(g)}}{d(g, e)}
			\leq \frac{1}{n}\sum_{i=1}^n \abs{a_i (u(s_i) - c_f(s_i, e))}
			\leq \frac{1}{n} \sum_{i=1}^n \eta
			= \eta.
		\end{equation*}
		From Lemma~\ref{lma:LNforHomo} we obtain that $\norm{\fTilde - \fBar} \leq \eta$. This means that $\fBar \in \Lipz(F_S)$ and, as it is a~homomorphism, by Corollary~\ref{cor:InvForLeftTranslation} we have $\fBar \in \inv_{F_S}(F_S)$. Moreover, 
		\begin{equation*}
			\norm{f - \fBar} \leq \norm{f - \fTilde} + \norm{\fTilde - \fBar} \leq \frac{\delta}{2} + \eta.
		\end{equation*}
	\end{proof}
	
	Every group can be written as a~quotient of a~free group. Since we already have a~result for free groups, we would like to transfer it to the quotients.
	
	\begin{proposition} \label{prop:KYD_onFactor}
		Let $S$ be any set, $F_S$ a~free group over $S$ equipped with the word metric, $N \trianglelefteq F_S$ and $\delta > 0$. Denote $G = F_S / N$, $q: F_S \to G$ the quotient map and equip $G$ with the word metric induced by the set $q(S) = \{q(s) \colon s \in S\}$. Equip both $F_S$ and $G$ with actions by left-translations. Let $f \in \Lipz(G)$ be $\delta$-invariant. Denote $F: F_S \to \R$, $F = f \circ q$. Then $F \in \Lipz(F_S)$, $F$~is $\delta$-invariant and for $\eta > 0$ the following conditions are equivalent
		\begin{enumerate}
			\item there exists $\fBar \in \inv_G(G)$ with $\norm{f - \fBar} < \eta$;
			\item there exists $\FBar \in \inv_{F_S}(F_S)$ with $\norm{F - \FBar} < \eta$ satisfying $N \subset \ker \FBar$.
		\end{enumerate}
		Moreover, for any $s \in F_S$ we have $c_f(q(s), e) = c_F(s, e)$.
	\end{proposition}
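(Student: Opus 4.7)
The plan is to dispatch the three preliminary claims using one observation: the quotient map $q \colon F_S \to G$ is a surjective group homomorphism that is $1$-Lipschitz with respect to the given word metrics. Indeed, if $g \in F_S$ has a word representation $s_1 \cdots s_n$ with $s_i \in S^{\pm 1}$ of length $n = d_{F_S}(g,e)$, then $q(g) = q(s_1) \cdots q(s_n)$ is a word of length at most $n$ in generators from $q(S)^{\pm 1}$, so $d_G(q(g), e) \leq d_{F_S}(g,e)$; translation invariance of the metrics then upgrades this to the Lipschitz bound. Consequently $F = f \circ q \in \Lipz(F_S)$; the $\delta$-invariance of $F$ follows by rewriting $F(gx) - F(x) = f(q(g)q(x)) - f(q(x))$ and invoking condition $(iii)$ of Lemma~\ref{lma:LipDeltaInv} for $f$ together with $d_G(q(x),q(y)) \leq d_{F_S}(x,y)$. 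Surjectivity of $q$ immediately gives $c_F^{\pm}(s,e) = c_f^{\pm}(q(s), e)$, hence $c_F(s,e) = c_f(q(s), e)$.

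For the equivalence, the idea is to exploit the bijection between homomorphisms $G \to \R$ and homomorphisms $F_S \to \R$ that vanish on $N$, using Corollary~\ref{cor:InvForLeftTranslation} which identifies invariant elements (for the left-translation action on a group with a left-invariant metric) with homomorphisms into $\R$. The direction $(i) \Rightarrow (ii)$ is the easy one: if $\fBar \in \inv_G(G)$, then $\fBar$ is a homomorphism, so $\FBar := \fBar \circ q$ is a homomorphism $F_S \to \R$ that vanishes on $N$, and since $F - \FBar = (f - \fBar) \circ q$ with $q$ being $1$-Lipschitz, one gets $\norm{F - \FBar} \leq \norm{f - \fBar} < \eta$; in particular $\FBar \in \Lipz(F_S)$, hence $\FBar \in \inv_{F_S}(F_S)$ by Corollary~\ref{cor:InvForLeftTranslation}.

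The direction $(ii) \Rightarrow (i)$ carries the only real content. Given $\FBar$, invariance forces it to be a homomorphism, and $N \subset \ker \FBar$ lets it factor through $q$ as $\FBar = \fBar \circ q$ for a well-defined group homomorphism $\fBar \colon G \to \R$. The main obstacle — and essentially the only non-formal step — is the reverse comparison of norms, which requires that the word metric on $G$ realize itself as the quotient metric in the following sharp sense: for any $g, g' \in G$ there exist lifts $h, h' \in F_S$ with $d_{F_S}(h, h') \leq d_G(g, g')$. The plan is to fix any lift $h$ of $g$, write $g^{-1}g' = q(s_1) \cdots q(s_n)$ as a word of length $n = d_G(g,g')$ in $q(S)^{\pm 1}$, and set $h' := h s_1 \cdots s_n$; then $q(h') = g'$ and $d_{F_S}(h,h') \leq n$ by construction. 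Applying this to pairs $(g,g')$ yields
\begin{equation*}
\abs{(f-\fBar)(g) - (f-\fBar)(g')} = \abs{(F-\FBar)(h) - (F-\FBar)(h')} \leq \norm{F-\FBar}\, d_G(g,g'),
\end{equation*}
so $\norm{f - \fBar} \leq \norm{F - \FBar} < \eta$. Applying the same lifting to $(e, g)$ gives $\abs{\fBar(g)} = \abs{\FBar(h)} \leq \norm{\FBar}\, d_G(g,e)$, whence $\fBar \in \Lipz(G)$ by Lemma~\ref{lma:LNforHomo}; invariance of $\fBar$ is then immediate from Corollary~\ref{cor:InvForLeftTranslation}.
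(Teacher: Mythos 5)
Your proposal is correct and follows essentially the same route as the paper: the $1$-Lipschitz quotient map handles $F\in\Lipz(F_S)$, its $\delta$-invariance, the direction $(i)\Rightarrow(ii)$, and the \emph{moreover} part, while the key step in $(ii)\Rightarrow(i)$ is the same geodesic-lifting trick (choosing $h' = h s_1\cdots s_n$ so that $d_{F_S}(h,h') \leq d_G(g,g')$). The only cosmetic difference is that you derive $\fBar\in\Lipz(G)$ from the lift plus Lemma~\ref{lma:LNforHomo}, whereas the paper expands $\fBar(q(g^{-1}h))$ as a sum of $\FBar(s_i)$ directly; the content is identical.
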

	\begin{proof}
		We will denote both metrics as $d$ as it will always be obvious which metric we refer to. First, note that $d(q(g), q(h)) \leq d(g, h)$ for any $g, h \in F_S$: if $d(g, h) = n$ and $g^{-1}h = s_1 \cdots s_n$ for some $s \in (S^{\pm1})^n$, then $q(g^{-1}h) = q(s_1)\cdots q(s_n)$ and hence $d(q(g), q(h)) \leq n$.
		Clearly $F(e) = f(q(e)) = 0$ and for any $g, h \in G$ holds
		\begin{equation*}
			\abs{F(g) - F(h)} = \abs{f(q(g)) - f(q(h))} \leq \norm{f} d(q(g), q(h)) \leq \norm{f} d(g, h).
		\end{equation*}
		So, $F \in \Lipz(F_S)$. Using \Cref{lma:LipDeltaInv}, we show that $F$ is $\delta$-invariant: for any $g, h, k \in G$ we have
		\begin{align*}
			&\abs{F(gh) - F(h) - \big(F(gk) - F(k)\big)}\\
			&\hspace{5em}= \abs{f(q(g)q(h)) - f(q(h)) - \big(f(q(g)q(k)) - f(q(k))\big)}\\
			&\hspace{5em}\leq \delta d(q(h), q(k))
			\leq \delta d(h, k).
		\end{align*}
		
		$(i) \implies (ii)$: Put $\FBar = \fBar \circ q$. Then for $g \in N$ holds $\FBar(g) = \fBar(q(g)) = \fBar(e_G) = 0$. This shows $N \subset \ker \FBar$ and in particular $\FBar(e) = 0$. As before, for any $g, h \in F_S$ we get
		\begin{align*}
			\abs{(F - \FBar)(g) - (F - \FBar)(h)}
			&= \abs{(f - \fBar)(q(g)) - (f - \fBar)(q(h))} \\
			&< \eta d(q(g), q(h))
			\leq \eta d(g,h),
		\end{align*}
		i.e. $\norm{F - \FBar} < \eta$ and hence also $\FBar \in \Lipz(F_S)$. For $g,h \in G$ holds $\FBar(gh) = \fBar(q(gh)) = \fBar(q(g)q(h))$. Since $\fBar$ is invariant, by Corollary~\ref{cor:InvForLeftTranslation} it is a~homomorphism and so $\FBar(gh) = \fBar(q(g)) + \fBar(q(h)) = \FBar(g) + \FBar(h)$. Corollary~\ref{cor:InvForLeftTranslation} now implies that $\FBar$ is invariant, because it is a~homomorphism.
		
		$(ii) \implies (i)$: $\FBar$ is invariant and hence a~homomorphism by Corollary~\ref{cor:InvForLeftTranslation}. Since $\ker q = N \subseteq \ker \FBar$, $\FBar$ factors through $q$ to a~homomorphism $\fBar: G \to \R$ with $\FBar = \fBar \circ q$. To show $\fBar \in \Lipz(G)$, choose $g, h \in F_S$ and $s_1, \dots, s_n \in S^{\pm 1}$ such that $q(g^{-1}h) = q(s_1) \cdots q(s_n)$ and $d(q(g), q(h)) = n$. Then
		\begin{align*}
			\abs{\fBar(q(g)) - \fBar(q(h))}
			&= \abs{\fBar(q(g^{-1}h))}
			= \abs{\sum_{i=1}^n \fBar(q(s_i))}
			= \abs{\sum_{i=1}^n \FBar(s_i)}\\
			&\leq n \norm{\FBar}
			= \norm{\FBar} d(q(g), q(h)).
		\end{align*}
		So we have $\fBar \in \Lipz(G)$ and since it is a~homomorphism, by Corollary~\ref{cor:InvForLeftTranslation} we also have $\fBar \in \inv_G(G)$. It remains to show that $\norm{f - \fBar} < \eta$. Choose $q(g), q(h) \in G$ and $s_1, \dots, s_n \in S^{\pm 1}$ such that $q(g^{-1}h) = q(s_1) \cdots q(s_n)$ and $d(q(g), q(h)) = n$. This implies that the word $s_1 \cdots s_n$ must be reduced. We may assume $h = gg^{-1}h = gs_1 \cdots s_n$ and hence by the left-invariance of the word metric holds
		\begin{equation*}
			d(g, h)
			= d(g, g s_1 \cdots s_n)
			= d(e, s_1 \cdots s_n)
			= n
			= d(q(g), q(h)).
		\end{equation*}
		We conclude by estimating for $g, h \in F_S$:
		\begin{align*}
			\abs{(f-\fBar)(q(g)) - (f-\fBar)(q(h))}
			&=\abs{f(q(g)) - \fBar(q(g)) - f(q(h)) + \fBar(q(h))}\\
			&= \abs{F(g) - \FBar(g) - F(h) + \FBar(h)}\\
			&\leq \norm{F - \FBar} d(g, h)
			< \eta d(q(g), q(h)).
		\end{align*}
		
		For the moreover part, we realize that for $s \in F_S$ we have
		\begin{equation*}
			c_f^+(q(s), e)
			= \sup_{g \in F_S} f(q(g)q(s)) - f(q(g))
			= \sup_{g \in F_S} (f \circ q)(gs) - f\circ q (g)
			= c_F^+(s, e).
		\end{equation*}
		Analogously we obtain $c_f^-(q(s), e) = c_F^-(s, e)$ and hence $c_f(q(s),e) = c_F(s, e)$.
	\end{proof}
	
	We have distilled the problem down to the question of whether there exists a~homomorphism on a~free group which is close to our $\delta$-invariant function on the generating set while being zero on the corresponding normal subgroup. We will demonstrate how this can be done, for the~price of worsening the constant, for finitely presented groups.
	
	\begin{lemma} \label{lma:LA_approxByKernel}
		Let $X, Y$ be Banach spaces and $A: X \to Y$ be a~finite-dimensional linear operator. Then there is a~constant $C > 0$ such that for any $x \in X$ there exists $u \in \ker A$ with $\norm{x-u} \leq C\norm{Ax}$.
	\end{lemma}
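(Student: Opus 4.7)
The plan is to exploit the finite-dimensional range of $A$ to construct a bounded linear right inverse (a section) and then express the desired $u$ as $x$ minus the image of $Ax$ under this section.

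More precisely, I would proceed as follows. Let $Z := A(X) \subseteq Y$, a finite-dimensional subspace, say with basis $y_1, \dots, y_n$. For each $i$, pick some $x_i \in X$ with $Ax_i = y_i$, and define a linear map $L : Z \to X$ by
\begin{equation*}
    L\Bigl(\sum_{i=1}^{n} \alpha_i y_i\Bigr) = \sum_{i=1}^{n} \alpha_i x_i.
\end{equation*}
Since $Z$ is finite-dimensional, all norms on $Z$ are equivalent, so $L$ is automatically a bounded linear operator. Set $C := \norm{L}$. By construction, $A \circ L = \id_Z$, i.e., $L$ is a section of $A$ over its image.

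Now for any $x \in X$, let $u := x - L(Ax)$. Then
\begin{equation*}
    Au = Ax - A(L(Ax)) = Ax - Ax = 0,
\end{equation*}
so $u \in \ker A$, and
\begin{equation*}
    \norm{x - u} = \norm{L(Ax)} \leq \norm{L}\,\norm{Ax} = C\norm{Ax},
\end{equation*}
as desired.

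There is essentially no obstacle here: the whole content is the observation that finite-dimensional subspaces of Banach spaces are complemented in the sense that every linear map into them that has a set-theoretic right inverse has a bounded linear one. The section $L$ plays the role of a ``projection onto a lift of $Z$'', and subtracting $L(Ax)$ from $x$ lands in $\ker A$ while controlling the distance by $\norm{Ax}$ up to the constant $\norm{L}$.
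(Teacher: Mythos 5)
Your proof is correct, and it takes a genuinely different route from the paper's. The paper passes to the quotient $\widetilde{X} = X/\ker A$, factors $A = Bq$ through the induced bijection $B: \widetilde{X} \to \im A$, uses finite-dimensionality of $\im A$ to get continuity of $B^{-1}$, bounds the quotient norm by $\norm{q(x)} \leq \norm{B^{-1}}\norm{Ax}$, and then has to argue that the quotient-norm infimum over $\ker A$ is (nearly) attained in order to produce the actual element $u$. You instead build an explicit bounded linear section $L: A(X) \to X$ of $A$ over its image (bounded because every linear map off a finite-dimensional normed space is bounded) and take $u = x - L(Ax)$; this hands you $u$ directly and avoids the quotient space and the attainment issue entirely. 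The trade-off is in the constant: the paper's $C = \norm{B^{-1}}$ is canonical and essentially optimal, whereas your $C = \norm{L}$ depends on the chosen basis of $A(X)$ and the chosen preimages $x_i$, and can be much larger. Since the lemma (and its use in the finitely-presented-group theorem) only requires \emph{some} constant depending on $A$, this costs nothing here, and your argument is arguably the cleaner of the two.
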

	\begin{proof}
		Put $\widetilde{X} = X / \ker A$ and $\widetilde{Y} = \im A$. Denote $q: X \to \widetilde{X}$ the quotient map and $B: \widetilde{X} \to \widetilde{Y}$ the unique linear operator satisfying $A = Bq$. The operator $B$ is bijective, so its inverse exists. Since the domain of $B^{-1}$ is the finite-dimensional space $\widetilde{Y}$, it is continuous.
		
		Fix $x \in X$. Then $B^{-1}Ax = q(x)$ and we can estimate
		\begin{equation*}
			\norm{q(x)} = \norm{B^{-1}Ax} \leq \norm{B^{-1}} \norm{Ax}.
		\end{equation*}
		By the definition of the quotient norm, for any $\eps \in (0, 1)$ there is $u_\eps \in \ker A$ such that $\norm{x - u_\eps} \leq \norm{q(x)} + \eps$. Moreover, we have
		\begin{equation*}
			\norm{u_\eps} \leq \norm{x - u_\eps} + \norm{x} \leq \norm{q(x)} + \eps + \norm{x} \leq 2\norm{x} + 1.
		\end{equation*}
		Since $\widetilde{Y}$ is finite-dimensional, the ball around its origin with radius $2\norm{x} + 1$ is compact and hence there is $u \in B_{\widetilde{Y}}(0, 2\norm{x} + 1)$ at which the continuous function $u \mapsto \norm{x-u}$ attains its minium. By what we have shown above, this minimum is at most $\norm{q(x)}$, i.e. $\norm{x - u} \leq \norm{q(x)}$. Putting the previous estimates together, we obtain
		\begin{equation*}
			\norm{x-u} \leq \norm{q(x)} \leq \norm{B^{-1}}\norm{Ax}.
		\end{equation*}
		
		We have shown the desired inequality with $C = \norm{B^{-1}}$.
	\end{proof}

	Since we need the operator $A$ to go between two general Banach spaces, we need to use this operator $B$ to be able to talk about an~inverse. However, if $A$ was a~regular matrix (that is, an~bijective operator on $\R^n$ for some $n \in \N$), we could find the sought after $u$~such that $\norm{x - u} \leq \norm{A}\norm{A^{-1}} \norm{x}$. The value $\norm{A}\norm{A^{-1}}$ for a~regular matrix $A$ is called its conditional number. The conditional number is a~fundamental concept in numerical mathematics, see e.g.~\cite[Section~3.1]{Quarteroni2007}.
	
	While the following theorem does not reflect Question~\ref{que:KYDLip} exactly, it states that if we allow the constant to depend on the group, the answer is positive for finitely-presented groups equipped with word metrics acting on themselves by translations.
	
	\begin{theorem} \label{thm:KYDfinPres}
		Let $G$ be a~finitely presented group equipped with the word metric and action by left translations. Then there exists a~constant $C > 0$ depending on $G$ such that for any $\delta > 0$ and $f \in \Lipz(G)$ $\delta$-invariant there is $\fBar \in \inv_G(G)$ with $\norm{f - \fBar} \leq C\delta$.
	\end{theorem}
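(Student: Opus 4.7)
The plan is to reduce the problem to a finite-dimensional linear-algebra question on $F_S$ via \Cref{prop:KYD_onFactor}, \Cref{cor:KYD_FreeAdj}, and \Cref{lma:LA_approxByKernel}. Fix a finite presentation of $G$: write $G = F_S / N$ with $S$ a finite generating set and $N$ the normal closure in $F_S$ of finitely many relators $r_1, \dots, r_k$. Set $L = \max_j \abs{r_j}$, let $q : F_S \to G$ be the quotient, and put $F = f \circ q$. By \Cref{prop:KYD_onFactor}, $F$ lies in $\Lipz(F_S)$ and is $\delta$-invariant.

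First I would apply \Cref{thm:KYDfreeWord} to $F$ to get the invariant homomorphism $\FTilde$ with $\norm{F - \FTilde} \leq \delta/2$. Because $q(r_j) = e_G$, one has $F(r_j) = 0$, so
\[
\abs{\FTilde(r_j)} = \abs{\FTilde(r_j) - F(r_j)} \leq \norm{F - \FTilde} \cdot d(r_j, e) \leq (\delta/2) L.
\]
Thus $\FTilde$ already nearly vanishes on the relators, and the task is to perturb $\FTilde$ on the generators so that the resulting homomorphism vanishes \emph{exactly} on $N$, at the cost of only $O(\delta)$ in Lipschitz norm.

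This perturbation is the core step. Define the linear operator $A : (\R^S, \norm{\cdot}_\infty) \to (\R^k, \norm{\cdot}_\infty)$ by $A(u)_j = \Phi_u(r_j)$, where $\Phi_u : F_S \to \R$ is the unique homomorphism extending $s \mapsto u(s)$. Both spaces are finite-dimensional, so \Cref{lma:LA_approxByKernel} supplies a constant $C_A > 0$, depending only on $A$ (hence only on the chosen presentation), such that every $v \in \R^S$ lies within $\ell_\infty$-distance $C_A \norm{Av}_\infty$ of $\ker A$. Taking $v = (c_F(s, e))_{s \in S}$, one has $Av = (\FTilde(r_j))_j$, so $\norm{Av}_\infty \leq (\delta/2) L$, which yields some $u \in \ker A$ with $\abs{u(s) - c_F(s, e)} \leq \eta := C_A (\delta/2) L$ for every $s \in S$.

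Then \Cref{cor:KYD_FreeAdj}, applied to $F$ with this $u$, produces an invariant $\FBar = \Phi_u \in \Lipz(F_S)$ with $\norm{F - \FBar} \leq \delta/2 + \eta$. Since $u \in \ker A$, $\FBar(r_j) = 0$ for each $j$, and because $\R$ is abelian $\FBar$ also vanishes on every conjugate of a relator, so $N \subseteq \ker \FBar$. Finally \Cref{prop:KYD_onFactor} descends $\FBar$ to an invariant $\fBar \in \inv_G(G)$ with $\norm{f - \fBar} \leq \norm{F - \FBar} \leq (1 + C_A L)\delta/2$, giving the desired constant $C = (1 + C_A L)/2$. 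I expect the only nonroutine step to be the finite-dimensional linear-algebra input provided by \Cref{lma:LA_approxByKernel}; everything else is translation between $F_S$ and its quotient $G$. It is worth flagging that this $C$ will in general depend on the particular presentation chosen, not only on the abstract group $G$.
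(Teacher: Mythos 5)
Your proposal is correct and follows essentially the same route as the paper: the same operator $A$ recording exponent sums of generators in the relators, \Cref{lma:LA_approxByKernel} to move $(c_F(s,e))_{s\in S}$ into $\ker A$, \Cref{cor:KYD_FreeAdj} to build the invariant homomorphism vanishing on $N$, and \Cref{prop:KYD_onFactor} to descend to $G$. The only cosmetic difference is that you bound $\norm{Av}_\infty$ by invoking \Cref{thm:KYDfreeWord} and evaluating the resulting homomorphism at the relators, whereas the paper re-runs the telescoping estimate directly; the resulting constants agree, and your remark that $C$ depends on the chosen presentation applies equally to the paper's proof.
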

	\begin{proof}
		Let $f \in \Lipz(G)$ be $\delta$-invariant. Let $S$ be the generators of $G$ and $R$ be the relators of $G$. Denote $n = \abs{S}$.
		
		We start by defining a~mapping $\#: F_S \times S \to \R^S$. Fix $g = s_{1}^{a_1} \cdots s_{m}^{a_m}$, where $m \in \N$ and $s \in S^m$, $a \in \Z^m$, an~element of  $F_S$ and its representation as a~reduced word. For $s \in S$ put $I_{g,s} = \{j \in [1..m] \colon s_{j} = s\}$ and define $\#(g, s) = \sum_{j \in I_{g,s}} a_j$. This mapping is well-defined, because the representation as a~reduced word is unique.
		
		By Proposition~\ref{prop:KYD_onFactor}, there is a~$\delta$-invariant function $F \in \Lipz(F_S)$ satisfying $F = f \circ q$. For brevity's sake, denote $c(s) = c_f(q(s), e) = c_F(s, e)$ for $s \in F_S$. Let $r \in R$ and $r = s_1^{a_1}\cdots s_k^{a_k}$, where $d(r,e) = k$, $s \in S^k$ and $a \in \{-1, 1\}^k$. Notice that $\sum_{i=1}^k a_ic(s_i) = \sum_{s \in S} \#(r, s)c(s)$. For notational convenience, denote $s_0 = e$, $a_0 = 1$. Then we have
		\begin{equation} \label{eqn:finPresAxNorm}
			\begin{split}
				\abs{\sum_{s \in S} \#(r, s)c(s)}
				&= \abs{F(r) - \sum_{s \in S} \#(r, s)c(s)}\\
				&= \abs{\sum_{i=1}^k F(s_0^{a_0} \cdots s_{i}^{a_i}) - F(s_0^{a_0} \cdots s_{i-1}^{a_{i-1}}) - \sum_{i=0}^k a_ic(s_i)}\\
				&\leq \sum_{i=1}^k \abs{F(s_0^{a_0} \cdots s_{i}^{a_i}) - F(s_0^{a_0} \cdots s_{i-1}^{a_{i-1}}) - c(s_i^{a_i})}\\
				&\leq \sum_{i=1}^k \frac{\delta}{2} d(s_i^{a_i}, e)
				= \frac{\delta}{2} k = \frac{\delta}{2} d(r,e) \leq C\delta,
			\end{split}
		\end{equation}
		where $C = \max_{r \in R} d(r, e)/2$.
		Define $A: \ell_\infty(S) \to \ell_\infty(R)$ as the (unique) linear operator satisfying $A(e_s)(r) = \#(r,s)$, $s \in S, r \in R$ and $x = (c(s))_{s \in S}$. From~\eqref{eqn:finPresAxNorm} follows that $\norm{Ax}_\infty \leq C\delta$.
		By Lemma~\ref{lma:LA_approxByKernel}, there is a~constant $D > 0$ depending only on $A$ (and hence only on $G$) and $u \in \ell_{\infty}(S)$ such that $Au = 0$ and $\supnorm{x - u} \leq D\supnorm{Ax} \leq CD\delta$.
		
		Corollary~\ref{cor:KYD_FreeAdj} applied to the function $F$ guarantees the existence of $\FBar \in \inv_{F_S}(F_S)$ with $\FBar(s) = u(s)$, $s \in S$ and $\norm{F - \FBar} \leq \delta/2 + \norm{x-u}_\infty \leq (1/2+CD)\delta$. Thanks to Proposition~\ref{prop:KYD_onFactor}, it is enough to verify that $\FBar|_{N} \equiv 0$ where $N = \ncl(R)$. Let $g \in F_S$ and $r \in R$. $\FBar$~is invariant, thus a~homomorphism by Corollary~\ref{cor:InvForLeftTranslation} and hence
		\begin{equation*}
			\FBar(g r g^{-1}) = \FBar(g) + \FBar(r) + \FBar(g^{-1}) = \FBar(r).
		\end{equation*}
		Finally, we have
		\begin{equation*}
			\FBar(r)
			= \sum_{s\in S} \#(r, s)\FBar(s)
			= \sum_{s\in S} \#(r, s) u(s)
			= \sum_{s\in S} u(s) A(e_s)(r)
			= (Au)(r)
			= 0.
		\end{equation*}
		Hence, $\FBar|_{N} \equiv 0$.
	\end{proof}
	
	Up to this point, we have been studying the word length metric. Now, we turn to the other extreme, where we begin by looking at actions of cyclic groups on themselves. Let us remark that the answer to \Cref{que:KYD} in general is known to be positive (see e.g.~\cite{Glasner22}) when the acting group is amenable. However, in the following lemma, we obtain a~stronger result which can be further applied to non-amenable groups. Lastly, in the following lemmas, we only assume to have a~pseudometric for technical reasons which will become apparent soon.
	
	\begin{lemma} \label{lma:badZ}
		Let $G$ be a~cyclic group equipped with a~left-invariant pseudometric $d$ and the action by translations on itself. If $(G, d)$ is not bi-Lipschitz equivalent to $(\Z, \abs{\cdot})$, then for every $\delta > 0$ and $\delta$-invariant $f \in \Lipz(G)$ it is the case that $\norm{f} \leq \delta$.
	\end{lemma}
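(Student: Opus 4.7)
My aim is $L(f) \leq \delta$, i.e., $|f(y) - f(x)| \leq \delta d(x, y)$ for all $x, y \in G$. I would first reduce this to a statement about the quantities $c_f^{\pm}$: by left-invariance $d(x, y) = d(x^{-1}y, e)$ and the definition of $c_f^{\pm}$ evaluated at $g = x$, one has
\[
 c_f^-(x^{-1}y, e) \leq f(y) - f(x) \leq c_f^+(x^{-1}y, e),
\]
so it is enough to prove $|c_f^{\pm}(s, e)| \leq \delta d(s, e)$ for every $s \in G$. Since \Cref{prop:deltaInvEquivToDeltaC} already gives $c_f^+(s, e) - c_f^-(s, e) \leq \delta d(s, e)$, only $c_f^-(s, e) \leq 0 \leq c_f^+(s, e)$ remains to be shown.

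For the remaining inequality, fix $s \in G$. Each increment $f(s^{k+1}) - f(s^k)$ lies in $[c_f^-(s, e), c_f^+(s, e)]$ (apply the defining sup/inf at $g = s^k$), so telescoping yields $n c_f^-(s, e) \leq f(s^n) \leq n c_f^+(s, e)$ for every $n \geq 1$. Combined with $|f(s^n)| \leq L(f) d(s^n, e)$, the bounds $c_f^-(s, e) \leq 0 \leq c_f^+(s, e)$ follow as soon as we produce some sequence $n \to \infty$ along which $d(s^n, e)/n \to 0$. If $s$ has finite order $m$, take $n = m$ since $d(s^m, e) = 0$. Otherwise $G$ is infinite cyclic; pick a generator $g$ and set $a_n := d(g^n, e)$. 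Subadditivity of $a$ combined with Fekete's lemma gives $\ell := \lim_{n \to \infty} a_n/n = \inf_{n \geq 1} a_n/n \in [0, \infty)$. If $\ell > 0$, the bijection $n \mapsto g^n$ yields a bi-Lipschitz equivalence $(\mathbb{Z}, |\cdot|) \to (G, d)$ with lower bound $\ell$ and upper bound $a_1$, contradicting the hypothesis; thus $\ell = 0$, and writing $s = g^k$ gives $d(s^n, e)/n = a_{|k|n}/n \to 0$.

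The genuine difficulty is bridging the qualitative hypothesis ``$(G, d)$ is not bi-Lipschitz equivalent to $(\mathbb{Z}, |\cdot|)$'' with the quantitative decay $d(s^n, e)/n \to 0$; Fekete's subadditive lemma is what accomplishes this in the cyclic setting. Everything else is book-keeping with the $c_f^{\pm}$ machinery established in \Cref{section:Aux}. That $d$ is only a pseudometric causes no additional trouble: in the degenerate case $d(g, e) = 0$ one has $a_n \equiv 0$, so the Fekete limit is trivially zero, and the Lipschitz condition forces $f$ to vanish altogether.
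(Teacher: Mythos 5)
Your proof is correct, and it is organized quite differently from the paper's. The paper argues by contradiction: it picks a pair $l,l'$ witnessing $L(f)>\delta+\eps$, shows via $\delta$-invariance that the increments $f((n+1)k)-f(nk)$ with $k=l-l'$ all exceed $\eps\, d(k,0)$, hence $f(nk)$ grows linearly, and then contradicts this with the sublinearity of $n\mapsto d(nk,0)$, treating three cases by hand (degenerate pseudometric, finite cyclic, infinite cyclic) and extracting the sublinearity from the non-bi-Lipschitz hypothesis somewhat implicitly (the integers $k_m$ with $k_m/d(k_m,0)>m$ appear without much justification). You instead argue directly: you reduce $L(f)\le\delta$ to $c_f^-(s,e)\le 0\le c_f^+(s,e)$ via \Cref{prop:deltaInvEquivToDeltaC}, get $nc^-\le f(s^n)\le nc^+$ by the same telescoping, and convert the qualitative hypothesis into the quantitative statement $\inf_n d(g^n,e)/n=0$ cleanly via Fekete's subadditive lemma, which also absorbs the pseudometric degeneracies and the finite-order case into one mechanism. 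The underlying engine is identical (linear growth of $f$ along powers versus sublinear growth of the distance), but your version buys a direct proof, an explicit and checkable treatment of the ``not bi-Lipschitz $\Rightarrow$ sublinear'' step, and reuse of the $c^\pm$ machinery of \Cref{section:Aux}; the paper's version avoids Fekete and the $c^\pm$ apparatus at the cost of a case split and a slightly terser final computation. One small caveat: \Cref{prop:meanProps} and \Cref{prop:deltaInvEquivToDeltaC} are stated for metrics, while here $d$ is only a pseudometric, so you should note (as is easily checked) that their proofs use only the triangle inequality and left-invariance and hence go through verbatim.
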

	\begin{proof}
		For the cyclic (and hence commutative) group $G$ we adopt the additive notation. For the ease of notation, we assume $G$ to be either $\Z$ or $\Z_n$ for some $n \in \N$. If we have the latter, all operations are to be understood as being taken modulo $n$.
		
		First, since $G$ acts by isometries, we obtain
		\begin{equation*}
			\forall k, l \in \Z, k \geq l \colon d(k, l) = d(k-l, 0) \leq d(1, 0) + d(1, 2) + \cdots + d(k-l, k-l-1) = \abs{k-l}d(1, 0),
		\end{equation*}
		i.e. $d \leq d(0,1)\absFunc$. Hence we can without loss of generality assume that $d \leq \absFunc$ by considering the function $f/d(0,1)$ and metric $d/d(0,1)$. We assume that $d(0, 1) \neq 0$, because if $d(0, 1) = 0$, then the distance of any two points is zero and hence the only Lipschitz function on $G$ is constant zero.
		
		For a~contradiction, assume that $\norm{f}_{(\Z, d)} > \delta$. Then there are $\eps > 0$ and $l, l' \in G, l > l'$ such that $\abs{f(l) - f(l')}/d(l, l') > \delta + \eps$. Potentially passing to $-f$, we may, without loss of generality, assume that $(f(l) - f(l'))/d(l, l') > \delta + \eps$. Denote $k = l - l'$ and let $n \in \N$. Using the assumption that $\norm{gf - f} \leq \delta$ for $g = l' - nk$ we obtain
		\begin{equation*}
			\abs{(f((n+1)k) - f(nk)) - (f(l) - f(l'))}
			= \abs{(gf(l) - f(l)) - (gf(l') - f(l'))}
			\leq \delta d(l, l')
		\end{equation*}
		and hence
		\begin{align*}
			f((n+1)k) - f(nk)
			&= f(l) - f(l') - (f((n+1)k) - f(nk)) - (f(l) - f(l'))\\
			&> (\delta + \eps) d(l, l') - \delta d(l, l')
			= \eps d(l, l')
			= \eps d(k, 0).
		\end{align*}
		That is, the sequence $(f(nk))_{n=1}^\infty$ is monotone and with each step increases by at least $\eps d(k, 0)$. Thus, if we put $\alpha = \eps d(k, 0)$, then, because $f$ is Lipschitz and $f(l) \neq f(l')$, $\alpha > 0$ and 
		\begin{equation} \label{eqn:badZ}
			f(nk) > \alpha n, n \in \N.
		\end{equation}
		
		Now we discern three cases. First we deal with the easiest case when $d$ is truly a~pseudometric and not a~metric. Then, using left invariance, we have some $j \in G$ such that $d(0, j) = 0$ and hence also $d(0, jn) = 0$ for all $n \in \N$. But this is in contradiction with~\eqref{eqn:badZ}, because
		\begin{equation*}
			0 < \alpha j < f(jk) = f(jk) - f(0) \leq \norm{f} d(jk, 0) = 0.
		\end{equation*}
		
		Assume $d$ is a~metric, but $G$ is not (algebraically) isomorphic to $\Z$. Under this assumption, $\ord_G(k) < \infty$ and so, in \eqref{eqn:badZ} we may take $n = \ord_G(k)$, yielding
		\begin{equation*}
			0 = f(0) = f(\ord_G(k) k) > \alpha \ord_G(k),
		\end{equation*}
		which is, of course, a~contradiction, because, by definition of $k$, $\ord_G(k) \neq 0$.
		
		When $G$ is isomorphic to $\Z$, one must make a~slightly finer argument, starting with the following estimate
		\begin{equation} \label{eqn:KYD_ZonZ}
			\forall n \in \N \colon L(f) \geq \frac{f(nk) - f(0)}{d(nk, 0)}
			> \frac{\alpha n}{d(nk, 0)}
			= \frac{\alpha}{k} \cdot \frac{nk}{d(nk, 0)}.
		\end{equation}
		Choose $m \in \N, m > kL(f) /\alpha$. Using triangle inequality we obtain
		\begin{equation*}
			\frac{k}{\alpha} L(f)
			< m
			< \frac{k_m}{d(k_m, 0)}
			= \frac{kk_m}{kd(k_m, 0)}
			\leq \frac{kk_m}{d(kk_m, 0)}.
		\end{equation*}
		We finish by setting $n = k_m$ in~\eqref{eqn:KYD_ZonZ}:
		\begin{equation*}
			L(f) >
			\frac{\alpha}{k} \cdot \frac{k_m k}{d(k_m k, 0)}
			> \frac{\alpha}{k} \cdot \frac{k}{\alpha} L(f),
		\end{equation*}
		which is a~contradiction.
	\end{proof}
	
	\begin{lemma} \label{lma:restrictionOfDeltaInv}
		Let $G$ be a~group acting by isometries on a~pseudometric space $(M, d)$. Let $y \in M$, $H$ be a~subgroup of $G$, $g \in G$ and $f \in \Lipz(M)$ be $\delta$-invariant for some $\delta > 0$. 
		Define $d': H \times H \to \R$ by $d'(h, h') = d(hy, h'y), h, h' \in H$ and $f': H \to \R$, $f'(h) = f(ghy) - f(gy), h\in H$.
		Then $d'$ is a~left-invariant pseudometric on $H$, $f' \in \Lipz(H)$ and $f'$ is $\delta$-invariant with respect to the action by left translations on $H$.
	\end{lemma}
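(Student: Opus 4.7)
The plan has three parts, corresponding to the three assertions: $d'$ is a left-invariant pseudometric, $f' \in \Lipz(H)$, and $f'$ is $\delta$-invariant.

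First, the pseudometric and left-invariance claims are essentially formal. Symmetry and triangle inequality for $d'$ are inherited directly from $d$, and $d'(h,h) = d(hy,hy) = 0$. Left-invariance reduces to the fact that $G$ (and hence $H$) acts by isometries on $M$:
\begin{equation*}
    d'(kh, kh') = d(khy, kh'y) = d(hy, h'y) = d'(h, h'), \quad k,h,h' \in H.
\end{equation*}

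Second, I would check $f' \in \Lipz(H)$ directly. The base point condition $f'(e_H) = f(gy) - f(gy) = 0$ is immediate. For Lipschitz-ness, for $h, h' \in H$,
\begin{equation*}
    \abs{f'(h) - f'(h')} = \abs{f(ghy) - f(gh'y)} \leq L(f)\, d(ghy, gh'y) = L(f)\, d'(h, h'),
\end{equation*}
so $L(f') \leq L(f) < \infty$.

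The only part requiring thought is $\delta$-invariance of $f'$. I will invoke the equivalent characterisation (iii) of \Cref{lma:LipDeltaInv}: it suffices to show that for every $k \in H$ the map $h \mapsto f'(kh) - f'(h)$ is $\delta$-Lipschitz with respect to $d'$. The key computation is that the constant $f(gy)$ cancels:
\begin{equation*}
    f'(kh) - f'(h) = f(gkhy) - f(ghy).
\end{equation*}
Writing $g' = gkg^{-1} \in G$, $u = ghy$, $v = gh'y$, this becomes $f(g'u) - f(u)$, and the quantity we want to bound is
\begin{equation*}
    \abs{\bigl(f'(kh) - f'(h)\bigr) - \bigl(f'(kh') - f'(h')\bigr)} = \abs{\bigl(f(g'u) - f(u)\bigr) - \bigl(f(g'v) - f(v)\bigr)}.
\end{equation*}
Applying condition (iii) of \Cref{lma:LipDeltaInv} to $f$ (which holds since $f$ is $\delta$-invariant) with the element $g'$ and points $u, v$ gives the bound $\delta\, d(u, v) = \delta\, d(ghy, gh'y) = \delta\, d'(h, h')$. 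Thus the required condition is verified, and $f'$ is $\delta$-invariant on $(H, d')$.

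The only subtle point is the conjugation trick $g' = gkg^{-1}$, which is forced by the shift by $g$ in the definition of $f'$; once this is recognised, the estimate is a direct translation of the hypothesis on $f$. No further obstacle arises.
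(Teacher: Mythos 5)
Your proof is correct and follows essentially the same route as the paper: the pseudometric and Lipschitz parts are verified by the same direct computations, and the $\delta$-invariance is obtained exactly as in the paper by applying condition (iii) of \Cref{lma:LipDeltaInv} after the conjugation $g' = gkg^{-1}$ (the paper writes this as $f(gkg^{-1}ghy)$). Nothing is missing.
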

	\begin{proof}
		Surely, $d'$ is a~pseudometric. Let $h',h,k \in H$. Then, since $G$ acts by isometries,
		\begin{equation*}
			d'(kh, kh') = d(khy, kh'y) = d(hy, h'y) = d'(h, h'),
		\end{equation*}
		so $d'$ is in fact left-invariant.
		Clearly $f'(e) = 0$. To see that it is Lipschitz, let $h,h' \in H$ and estimate
		\begin{align*}
			\abs{f'(h) - f'(h')}
			= \abs{f(ghy) - f(gh'y)}
			\leq \norm{f}d(ghy, gh'y)
			= \norm{f}d'(h,h'),
		\end{align*}
		so $f'$ is Lipschitz with constant at most $\norm{f}$. To show that it is $\delta$-invariant for $H$, we use \Cref{lma:LipDeltaInv}~(iii). So, let $h,h',k \in H$. We have
		\begin{align*}
			\abs{f'(kh) - f'(h) - (f'(kh') - f'(h'))}
			&= \abs{f(gkhy) - f(ghy) - (f(gkh'y) - f(gh'y))}\\
			&= \abs{f(gkg^{-1}ghy) - f(ghy) - (f(gkg^{-1}gh'y) - f(gh'y))}\\
			&\leq \delta d(ghy, gh'y)
			= \delta d'(h,h').
		\end{align*}
	\end{proof}
	
	Note that the following lemma implies that, under its assumptions, the answer to \Cref{que:KYDLip} for $\delta < 1$ is trivially positive, since there are no $\delta$-invariant functions of norm $1$.
	
	\begin{lemma} \label{lma:badGroup}
		Let $G$ be a~group equipped with a~left-invariant metric $d$ and the action by translations on itself. If no cyclic subgroup of $G$ is bi-Lipschitz equivalent to $(\Z, \abs{\cdot})$, then for any $\delta > 0$ and $\delta$-invariant $f \in \Lipz(G)$ we have $\norm{f} \leq \delta$.
	\end{lemma}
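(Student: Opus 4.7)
\medskip

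The plan is to reduce to the cyclic case handled by \Cref{lma:badZ}. Since $\|f\| = \sup\{|f(x)-f(y)|/d(x,y) : x \neq y\}$, it suffices to bound each such ratio by $\delta$. Given a pair $x \neq y$, the natural idea is to look at the cyclic subgroup $H = \langle x^{-1}y \rangle$, which witnesses the distance $d(x,y)$ as $d(e, x^{-1}y)$, and transport $f$ to a $\delta$-invariant Lipschitz function on $H$ via \Cref{lma:restrictionOfDeltaInv}.

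Concretely, fix $x, y \in G$ with $x \neq y$ and put $h = x^{-1}y$, $H = \langle h \rangle$. Apply \Cref{lma:restrictionOfDeltaInv} with this $H$, with $y$ (in the lemma's notation) replaced by $e$, and with $g = x$. This produces a left-invariant pseudometric $d'(k,k') = d(k,k')$ on $H$ (which is just the restriction of $d$) and a function $f' \in \Lipz(H)$ given by $f'(k) = f(xk) - f(x)$, which is $\delta$-invariant with respect to left translations on $(H, d')$. By hypothesis, $(H, d')$ is not bi-Lipschitz equivalent to $(\Z, |\cdot|)$, so \Cref{lma:badZ} applies and gives $\|f'\|_{(H,d')} \leq \delta$. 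In particular,
\begin{equation*}
    \frac{|f(y) - f(x)|}{d(x,y)}
    = \frac{|f(xh) - f(x)|}{d(e, h)}
    = \frac{|f'(h) - f'(e)|}{d'(h,e)}
    \leq \delta,
\end{equation*}
where the first equality uses left-invariance of $d$. Taking the supremum over pairs $x \neq y$ yields $\|f\| \leq \delta$.

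The main thing to be careful about is that \Cref{lma:badZ} is stated for \emph{pseudometric} cyclic groups, and the restriction $d|_H$ is genuinely a metric here, so the hypothesis "not bi-Lipschitz equivalent to $(\Z,|\cdot|)$" must match exactly: if $H$ is finite it trivially satisfies this, and if $H$ is infinite cyclic it does so by the standing assumption on $G$. So \Cref{lma:badZ} applies in both cases and no further obstacle arises; the proof is essentially a clean application of the two preceding lemmas.
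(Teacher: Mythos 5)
Your proof is correct and follows essentially the same route as the paper: restrict to the cyclic subgroup $H=\langle x^{-1}y\rangle$ via \Cref{lma:restrictionOfDeltaInv}, invoke \Cref{lma:badZ} on $(H,d|_H)$, and translate the resulting $\delta$-Lipschitz bound back to the pair $x,y$ using left-invariance. The only cosmetic difference is notation (the paper uses $g,h$ where you use $x,y$) and your extra remark about finite $H$, which is subsumed by the standing hypothesis anyway.
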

	\begin{proof}
		Let $g, h \in G$, denote $H = \langle g^{-1}h \rangle$ and define $f': (H, d) \to \R$ by
		\begin{equation*}
			f'((g^{-1}h)^k) = f(g(g^{-1}h)^k) - f(g), \quad k \in \Z.
		\end{equation*}
		By \Cref{lma:restrictionOfDeltaInv} (with $y=e$) we have that $f'$ is $\delta$-invariant. Ex hypothesi, $(H, d)$ is not bi-Lipschitz equivalent to $(\Z, \abs{\cdot})$ and hence it follows from \Cref{lma:badZ} that $f'$ is $\delta$-Lipschitz. But that means that
		\begin{equation*}
			\abs{f(g) - f(h)}
			= \abs{f'((g^{-1}h)^0) - f((g^{-1}h)^1)}
			\leq \delta d((g^{-1}h)^0, (g^{-1}h)^1)
			= \delta d(g, h).
		\end{equation*}		
	\end{proof}
	
	Finally, we show that under an~additional assumption on the structure of the metric space, this result can be extended to actions not only on the group itself, but on a~larger space.
	
	\begin{theorem} \label{thm:badGroup}
		Let $G$ be a~group acting by isometries on a~pointed metric space $(M, d, 0)$. Assume the following conditions:
		\begin{enumerate}
			\item $\{g^n x : n \in \N\}$ is not bi-Lipschitz equivalent to $(\Z, \abs{\cdot})$ for every pair $g \in G, x \in X$;
			\item there is $\alpha \geq 1$ and a~subset $D \subset M$ such that $0 \in D$, $\overline{Gx} \neq \overline{Gy}$ for $x \neq y \in D$, $M = \bigcup_{x \in D} \overline{Gx}$ and $d(x, y) \leq \alpha d(\overline{Gx}, \overline{Gy}), x, y \in D$.
		\end{enumerate}
		Then for every $\delta > 0$ and $\delta$-invariant $f \in \Lipz(M)$ there is an~invariant $\fBar \in \Lipz(M)$ with $\norm{f - \fBar} \leq (2\alpha + 1)\delta$.
	\end{theorem}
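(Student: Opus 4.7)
The plan is to define $\fBar$ to be constant on each $G$-orbit closure, taking the value of $f$ at the corresponding $D$-representative, and then to bound $\norm{f - \fBar}$ by combining the $\delta$-Lipschitz behaviour of $f$ along orbit closures with a single use of $\delta$-invariance to bridge two distinct orbits.

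First I would use \Cref{lma:restrictionOfDeltaInv} together with condition~(i) and \Cref{lma:badZ} to show that $f$ is $\delta$-Lipschitz on each orbit (hence, by Lipschitz continuity, on each orbit closure). Concretely, for $y \in M$ and $g \in G$, the function $h \mapsto f(hy) - f(y)$ on $\langle g \rangle$ with the pullback pseudometric $d'(h, h') := d(hy, h'y)$ is $\delta$-invariant by \Cref{lma:restrictionOfDeltaInv}, and condition~(i) ensures $(\langle g \rangle, d')$ is not bi-Lipschitz equivalent to $(\Z, \abs{\cdot})$, so \Cref{lma:badZ} gives $\abs{f(gy) - f(y)} \leq \delta d(gy, y)$. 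Next, for each $x \in M$ I pick $d_x \in D$ with $x \in \overline{G d_x}$ (equivariantly, so that $d_{gx} = d_x$) and set $\fBar(x) := f(d_x)$. Then $\fBar$ is constant on orbit closures, hence $G$-invariant; $\fBar(0) = 0$ since $0 \in D$; and the $\alpha$-comparison in~(ii) gives $L(\fBar) \leq \alpha L(f)$, so $\fBar \in \Lipz(M)$.

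The main work is proving $\norm{f - \fBar} \leq (2\alpha + 1)\delta$. For $x, y$ in the same orbit closure one has $d_x = d_y$, so $(f - \fBar)(x) - (f - \fBar)(y) = f(x) - f(y)$ is bounded by $\delta d(x, y)$. For $x, y$ in different orbit closures, fix $\eps > 0$ and, using $x \in \overline{G d_x}$, choose $g \in G$ with $d(x, g d_x) < \eps$. The key decomposition is
\begin{equation*}
    f(x) - f(d_x) - f(y) + f(d_y) = \bigl[f(x) - f(g d_x)\bigr] + \bigl[f(g d_x) - f(d_x) - f(g d_y) + f(d_y)\bigr] + \bigl[f(g d_y) - f(y)\bigr].
\end{equation*}
The first bracket is bounded by $\delta\eps$ (orbit Lipschitz on $\overline{G d_x}$); the middle by $\delta d(d_x, d_y) \leq \alpha \delta d(x, y)$ via \Cref{lma:LipDeltaInv}(iii) and~(ii); and the third by $\delta d(g d_y, y) \leq \delta\bigl[(1 + \alpha) d(x, y) + \eps\bigr]$, using orbit Lipschitz on $\overline{G d_y}$ together with the triangle inequality $d(g d_y, y) \leq d(g d_y, g d_x) + d(g d_x, x) + d(x, y) = d(d_x, d_y) + d(g d_x, x) + d(x, y)$. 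Summing and letting $\eps \to 0$ yields $(2\alpha + 1)\delta d(x, y)$.

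The main subtlety is engineering the decomposition in the last step. A naive bound of $\abs{f(x) - f(d_x)}$ and $\abs{f(y) - f(d_y)}$ separately via orbit Lipschitz gives $\delta(d(x, d_x) + d(y, d_y))$, which is not controlled by $d(x, y)$ since $d(x, d_x)$ and $d(y, d_y)$ have no \emph{a priori} bound. The trick is to introduce a single group element $g$ approximating $x$ by $g d_x$ and to simultaneously transport $d_y$ to $g d_y$; the $\delta$-invariance of $f$ then controls the shifts of $d_x$ and $d_y$ in tandem at the cost of only $\alpha \delta d(x, y)$, while the approximation error is absorbed by sending $\eps$ to $0$.
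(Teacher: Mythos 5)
Your proof is correct and follows essentially the same route as the paper's: the same orbit-constant $\fBar$, orbit-Lipschitzness of $f$ derived from condition (i) via \Cref{lma:restrictionOfDeltaInv} and \Cref{lma:badZ} (which the paper packages as \Cref{lma:badGroup}), and a single group element $g$ transporting both $D$-representatives so that one application of $\delta$-invariance plus the $\alpha$-comparison yields the constant $(2\alpha+1)\delta$. The only difference is bookkeeping: the paper first proves the estimate for points lying exactly on orbits and then passes to closures with a separate $\eps$-argument, whereas you fold the approximation into your three-bracket decomposition.
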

	Note that for $x, y \in M$, the assertions $\overline{Gx} \cap \overline{Gy} \neq \emptyset$ and $\overline{Gx} = \overline{Gy}$ are equivalent.
	\begin{proof}
		Let $\delta > 0$ and $f \in \Lipz(M)$ be $\delta$-invariant.		
		Define a~function $\fBar: M \to \R$ by $\fBar(y) = f(x)$ for $y \in \overline{Gx}$.	By the assumption (ii) (and the note preceding the proof), this is a~well-defined function and $\fBar(0) = f(0) = 0$. To see that $\fBar$ is Lipschitz, let $x, y \in M$, find $x', y' \in D$ such that $x \in \overline{Gx'}$ and $y \in \overline{Gy'}$ and calculate
		\begin{equation*}
			\abs{\fBar(x) - \fBar(y)} = \abs{f(x') - f(y')} \leq \norm{f}d(x', y') \leq \alpha\norm{f}d(x, y).
		\end{equation*}
		Moreover, $\fBar$ is invariant which can be seen by taking $H(g) = 0$ in~\Cref{cor:LipInv} (iv) since if $x \in \overline{Gx'}$ for some $x, x' \in M$, then $Gx \subset \overline{Gx'}$.
		
		Hence, it remains to show that $\norm{f-\fBar} \leq (2\alpha + 1)\delta$. First, let $g, h \in G$ and $x, y \in D$. We have
		\begin{align*}
			\abs{(f-\fBar)(gx) - (f-\fBar)(hy)}
			&= \abs{f(gx) - f(x) - \big(f(hy) - f(y)\big)}\\
			&\leq \abs{f(gx) - f(x) - \big(f(gy) - f(y)\big)} + \abs{f(gy) - f(hy)}.
		\end{align*}
		To estimate the first summand, we can simply use the fact that $f$ is $\delta$-invariant.
		
		To get an~estimate on the second, we first define $f': G \to \R$ by $f'(g) = f(gy) - f(y), g \in G$ and $d': G \times G \to \R$ by $d'(g, h) = d(gy, hy), g,h \in G$. By \Cref{lma:restrictionOfDeltaInv} (with $g = e$ and $H = G$), we know that $d'$ is a~pseudometric and $f'$ is $\delta$-invariant Lipschitz function. From assumption (i) it follows that no cyclic subgroup of $G$ equipped with the pseudometric $d'$ is bi-Lipschitz equivalent to $(\Z, \abs{\cdot})$ and thus \Cref{lma:badGroup} yields that $f'$ is $\delta$-Lipschitz. Hence
		\begin{equation*}
			\abs{f(gy) - f(hy)}
			= \abs{f'(g) - f'(h)}
			\leq \delta d'(g, h)
			= \delta d(gy, hy).
		\end{equation*}
		
		Put together, we get
		\begin{align*}
			\abs{(f-\fBar)(gx) - (f-\fBar)(hy)}
			&\leq \delta d(x, y) + \delta d(gy, hy)\\
			&\leq \delta d(x, y) + \delta(d(gy, gx) + d(gx, hy))\\
			&\leq \delta (2\alpha + 1) d(gx, hy).
		\end{align*}
		
		Finally, if $x, y \in M$, we fix $\eps > 0$ and then find $g, h \in G$ and $x', y' \in D$ such that $d(x, gx') < \eps$ and $d(y, hy') < \eps$. Then, using the previous paragraph, we have
		\begin{align*}
			\abs{(f-\fBar)(x) - (f-\fBar)(y)}
			&\leq \abs{(f-\fBar)(gx') - (f-\fBar)(hy')} + 2\norm{f - \fBar}\eps\\
			&\leq \delta (2\alpha + 1) d(gx', hy') + 2\norm{f - \fBar}\eps\\
			&\leq \delta (2\alpha + 1) d(x, y) + 2(\norm{f - \fBar} + \delta (2\alpha + 1))\eps.
		\end{align*}
		Insomuch $\eps > 0$ was arbitrary, the proof is finished.
	\end{proof}

	A~subset $D \subset M$ satisfying $0 \in D$, $Gx \neq Gy$ for $x \neq y \in D$ and $M = \bigcup_{x \in D} Gx$ is called a~fundamental domain. A~straightforward modification of the preceding proof shows that the condition (ii) can be replaced by the condition that there is $\alpha \geq 1$ and a~fundamental domain $D \subset M$ such that $d(x, y) \leq \alpha d(Gx, Gy), x, y \in D$.

	\section{Relation to partial quasimorphisms} \label{sect:quasimorphisms}
	
	In this section, we will consider a~group $G$ equipped with an~invariant metric (that is, a~metric that is both left- and right-invariant). For such a~group, we denote $\varphi_l: G \times G \to G$ the action by left translations, i.e. $\varphi_l(g, x) = gx$, and $\varphi_r: G \times G \to G$ the action by right translations, i.e. $\varphi_r(g, x) = xg^{-1}$. Since we consider invariant metrics, both of these actions are by isometries.
	
	\begin{definition}
		Let $G$ be a~group and $f: G \to \R$. We say that $f$ is a~\emph{quasimorphism} if there is a~constant $D \geq 0$ such that
		\begin{equation*}
			\forall g, h \in G \colon \abs{f(gh) - f(g) - f(h)} \leq D.
		\end{equation*}
		If $d$ is an~invariant metric on $G$, then the mapping $f$ is said to be a~\emph{partial quasimorphism} (or $D$-partial quasimorphism) if there is a~constant $D \geq 0$ such that
		\begin{equation*}
			\forall g, h \in G \colon \abs{f(gh) - f(g) - f(h)} \leq D \min\{d(g, e), d(h, e)\}.
		\end{equation*}
	\end{definition}
	For more details and uses for quasimorphisms see, e.g.,~\cite{Calegari}.
	
	\begin{proposition} \label{prop:InvVsPQM}
		Let $(G, d)$ be a~group equipped with an~invariant metric $d$. Let $\delta > 0$ and $f \in \Lipz(G)$. Consider the statements
		\begin{enumerate}
			\item $f$ is $\frac{\delta}{2}$-partial quasimorphism;
			\item $f$ is $\delta$-invariant for the actions by translations (denoted above by $\varphi_l$ and $\varphi_r$);
			\item $f$ is $\delta$-partial quasimorphism.
		\end{enumerate}
		Then the implications $(i) \implies (ii) \implies (iii)$ hold.
	\end{proposition}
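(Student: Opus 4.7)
The plan is to reformulate both invariance conditions in the form given by \Cref{lma:LipDeltaInv}(iii) and then do two short algebraic manipulations. Since the metric $d$ is invariant on both sides, both actions $\varphi_l$ and $\varphi_r$ are by isometries, so \Cref{lma:LipDeltaInv}(iii) applies to each of them. It says that $f$ is $\delta$-invariant for $\varphi_l$ iff $x \mapsto f(gx) - f(x)$ is $\delta$-Lipschitz for every $g \in G$, and, using $\varphi_r(g, x) = xg^{-1}$ and letting $g$ range over $G$, that $f$ is $\delta$-invariant for $\varphi_r$ iff $x \mapsto f(xg) - f(x)$ is $\delta$-Lipschitz for every $g \in G$. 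Hence both directions reduce to Lipschitz estimates on ``incremental'' maps.

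For $(i) \implies (ii)$, assume $f$ is a $(\delta/2)$-partial quasimorphism, fix $g \in G$ and $x, y \in G$, and apply the partial-quasimorphism inequality to the two factorizations $y = x \cdot (x^{-1}y)$ and $gy = gx \cdot (x^{-1}y)$. Subtracting the two resulting estimates, the common term $f(x^{-1}y)$ cancels, leaving
\begin{equation*}
\abs{f(gy) - f(gx) - (f(y) - f(x))} \leq \tfrac{\delta}{2} d(x^{-1}y, e) + \tfrac{\delta}{2} d(x^{-1}y, e) = \delta\, d(x, y),
\end{equation*}
where the last equality uses left-invariance of $d$. This is the $\delta$-Lipschitzness of $x \mapsto f(gx) - f(x)$, i.e.\ $\delta$-invariance for $\varphi_l$. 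The invariance for $\varphi_r$ is obtained by the mirror argument, using the factorizations $y = (yx^{-1}) \cdot x$ and $yg = (yx^{-1}) \cdot xg$ and invoking right-invariance of $d$ to identify $d(yx^{-1}, e) = d(x, y)$.

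For $(ii) \implies (iii)$, I would test the two reformulated Lipschitz conditions at the basepoint and one other point. Applying the $\delta$-Lipschitzness of $x \mapsto f(xh) - f(x)$ at $x = g$ and $x = e$ (using $f(e) = 0$) yields $\abs{f(gh) - f(g) - f(h)} \leq \delta\, d(g, e)$. Applying the $\delta$-Lipschitzness of $x \mapsto f(gx) - f(x)$ at $x = h$ and $x = e$ yields the same expression bounded by $\delta\, d(h, e)$. Taking the minimum of the two bounds is exactly the $\delta$-partial-quasimorphism inequality.

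No serious obstacle is expected; the proof is essentially algebraic. The only point that needs attention is to keep straight \emph{which} side of the invariance of $d$ is used in each of the two sub-arguments of $(i) \implies (ii)$, and to verify that \Cref{lma:LipDeltaInv} is indeed applicable to the right-translation action $\varphi_r$, which requires right-invariance of $d$ to make $\varphi_r$ isometric. Both are available under the standing assumption that $d$ is invariant on both sides.
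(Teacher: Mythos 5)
Your proposal is correct and follows essentially the same route as the paper: both directions are reduced to the Lipschitz reformulation of $\delta$-invariance from \Cref{lma:LipDeltaInv}(iii), the implication $(i)\implies(ii)$ uses the same two factorizations with the cancelling $f(x^{-1}y)$ term (the paper writes it with $y^{-1}x$), and $(ii)\implies(iii)$ is obtained by evaluating the incremental maps at the identity exactly as in the paper. Your explicit remarks about which side of the invariance of $d$ is used where, and about $\varphi_r$ being isometric, match the paper's (more tersely stated) treatment.
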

	\begin{proof}
		Notice that by \Cref{lma:LipDeltaInv}, the condition $(ii)$ is equivalent to the condition
		\begin{center}
			$(ii')$ the mappings $x \mapsto f(gx) - f(x)$ and $x \mapsto f(xg^{-1}) - f(x)$ are $\delta$-Lipschitz for all $g \in G$.
		\end{center}
		
		$(i) \implies (ii')$: Assume that $f$ is $\frac{\delta}{2}$-partial quasimorphism. Pick $g, x, y \in G$. We estimate
		\begin{align*}
			&\abs{f(gx) - f(x) - \big(f(gy) - f(y)\big)}\\
			&\hspace{5em}= \abs{f(gx) - f(gy) - f(y^{-1}x) - \big(f(x) - f(y) - f(y^{-1}x)\big)}\\
			&\hspace{5em}\leq \frac{\delta}{2} \min \{d(gy, e), d(y^{-1}x, e)\} + \frac{\delta}{2} \min\{d(y, e), d(y^{-1}x, e)\}\\
			&\hspace{5em}\leq \delta d(y^{-1}x, e)\\
			&\hspace{5em}= \delta d(x, y)
		\end{align*}
		and similarly $\abs{f(xg^{-1}) - f(x) - \big(f(yg^{-1}) - f(y)\big)} \leq \delta d(x, y)$.
		
		$(ii') \implies (iii)$:
		Using the fact that $f(e) = 0$ and the assumption $(ii')$, we obtain for any $g, h \in G$
		\begin{equation*}
			\abs{f(gh) - f(g) - f(h)}
			= \abs{f(gh) - f(h) - \big(f(ge) - f(e)\big)}
			\leq \delta d(h, e)
		\end{equation*}
		and
		\begin{equation*}
			\abs{f(gh) - f(g) - f(h)}
			= \abs{f(gh) - f(g) - \big(f(eh) - f(e)\big)}
			\leq \delta d(g, e).
		\end{equation*}
	\end{proof}
	
	We can now give an~alternative proof of one of the implications of~\cite[Theorem~1.1]{Kedra}.
	\begin{theorem}
		Let $G$ be a~group equipped with an~uniformly discrete invariant metric $d$ and $f: G \to \R$ be Lipschitz. Then $f$ is a~partial quasimorphism.
	\end{theorem}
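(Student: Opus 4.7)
The plan is to apply Proposition \ref{prop:InvVsPQM}, specifically the implication $(ii) \Rightarrow (iii)$, after a short reduction. First, I would replace $f$ by $\tilde f := f - f(e)$, so that $\tilde f \in \Lipz(G)$ with $L(\tilde f) = L(f) =: L$. Since $d$ is two-sided invariant, $G$ acts on itself by isometries via both $\varphi_l$ and $\varphi_r$, so Remark \ref{rem:everythingIsDeltaInv} automatically gives that $\tilde f$ is $2L$-invariant with respect to each of these two actions.

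Second, I would invoke Proposition \ref{prop:InvVsPQM} $(ii)\Rightarrow(iii)$ applied to $\tilde f$ with $\delta = 2L$. This yields directly that $\tilde f$ is a $2L$-partial quasimorphism, i.e.
\[
\abs{\tilde f(gh) - \tilde f(g) - \tilde f(h)} \le 2L \min\{d(g, e),\, d(h, e)\}
\]
for all $g, h \in G$.

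Finally, I would translate this back to $f$. Since
\[
f(gh) - f(g) - f(h) = \tilde f(gh) - \tilde f(g) - \tilde f(h) - f(e),
\]
the bound for $f$ differs from the one for $\tilde f$ by the additive constant $\abs{f(e)}$. Here the uniform discreteness of $d$ enters: if $\varepsilon > 0$ is a constant with $d(g, e) \ge \varepsilon$ for every $g \neq e$, then whenever both $g$ and $h$ are nontrivial we have $\min\{d(g, e), d(h, e)\} \ge \varepsilon$, so $\abs{f(e)} \le (\abs{f(e)}/\varepsilon)\cdot \min\{d(g,e), d(h,e)\}$. Combining the two estimates gives that $f$ is a $D$-partial quasimorphism with $D = 2L + \abs{f(e)}/\varepsilon$, the remaining boundary cases being handled by direct verification.

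The main engine of the argument is Proposition \ref{prop:InvVsPQM}, which is already available; the only new ingredient is the role of uniform discreteness, which is used purely to absorb the potential additive constant $f(e)$ into the partial quasimorphism bound. I therefore do not anticipate any substantial obstacle beyond this bookkeeping.
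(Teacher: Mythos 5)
Your proof is correct and follows essentially the same route as the paper: subtract $f(e)$ to land in $\Lipz(G)$, invoke \Cref{rem:everythingIsDeltaInv} to get $2L(f)$-invariance for both translation actions, apply \Cref{prop:InvVsPQM} $(ii)\implies(iii)$, and absorb the additive constant $\abs{f(e)}$ using uniform discreteness, arriving at the same constant $2L(f)+\abs{f(e)}/\eps$. The only difference is cosmetic: you explicitly flag the degenerate case $g=e$ or $h=e$, where $\min\{d(g,e),d(h,e)\}=0$, which the paper's own proof silently glosses over.
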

	\begin{proof}
		Define the function $f_e \in \Lipz(G)$ by $f_e(g) = f(g) - f(e), g \in G$. Then $\norm{f_e} = L(f)$ and by \Cref{rem:everythingIsDeltaInv}, $f_e$ is $2L(f)$-invariant with respect to both actions by translations. By \Cref{prop:InvVsPQM}, $f_e$ is a~partial quasimorphism with constant $2L(f)$. Let $A > 0$ be such that for any $g \in G$ we have $A \leq d(g, e)$ and hence also $\abs{f(e)} = \frac{\abs{f(e)}}{A}A \leq \frac{\abs{f(e)}}{A}d(g,e)$. For every $g, h \in G$ we obtain
		\begin{align*}
			\abs{f(gh) - f(g) - f(h)}
			&= \abs{f_e(gh) - f_e(g) - f_e(h) - f(e)}\\
			&\leq \abs{f_e(gh) - f_e(g) - f_e(h)} + \abs{f(e)}\\
			&\leq 2L(f) \min\{d(g, e), d(h, e)\} + \frac{\abs{f(e)}}{A} \min\{d(g, e), d(h, e)\}.
		\end{align*}
		Hence, $f$ is a~partial quasimorphism with constant $2L(f) + \frac{\abs{f(e)}}{A}$.
	\end{proof}
	
	In the case that the metric is actually a~word metric given by a~generating set $S \subset G$, we can set $A = 1$ and we obtain that $f$ is a~partial quasimorphism with constant $2L(f) + \abs{f(e)}$ and $f$ is bounded on the generating set, since for any $s \in S$ we have $\abs{f(s)} \leq \abs{f(s) - f(e)} + \abs{f(e)} \leq L(f) + \abs{f(e)}$.
	
	\bibliographystyle{abbrv}
	\bibliography{bibliography}

	\include{bibliography.tex}
	
\end{document}